\DeclareMathOperator{\mult}{mult}
\DeclareMathOperator{\res}{res}
\DeclareMathOperator{\Int}{int}
\DeclareMathOperator{\relint}{relint}
\DeclareMathOperator{\Supp}{Supp}
\DeclareMathOperator{\Diff}{Diff}
\DeclareMathOperator{\Div}{WDiv}
\DeclareMathOperator{\bDiv}{\mathbf{Div}}
\DeclareMathOperator{\Mob}{Mob}
\DeclareMathOperator{\bMob}{\mathbf{Mob}}
\newcommand{\R}{\mathbb{R}}
\newcommand{\Q}{\mathbb{Q}}
\newcommand{\N}{\mathbb{N}}
\newcommand{\Z}{\mathbb{Z}}
\newcommand{\D}{\mathbf{D}}
\newcommand{\F}{\mathbf{F}}
\newcommand{\M}{\mathbf{M}}
\newcommand{\A}{\mathbf{A}}
\newcommand{\B}{\mathbf{B}}
\newcommand{\K}{\mathbf{K}}
\newcommand{\m}{\mathbf{m}}
\newcommand{\n}{\mathbf{n}}
\newcommand{\OO}{\mathcal{O}}
\newcommand{\dist}{\mathrm{dist}}
\newcommand{\mcal}{\mathcal}
\theoremstyle{plain}
\newtheorem{te}{Theorem}[section]
\newtheorem{lem}[te]{Lemma}
\newtheorem{cor}[te]{Corollary}
\newtheorem{prop}[te]{Proposition}
\newtheorem*{conA}{Conjecture A}
\newtheorem*{conB}{Conjecture B}
\theoremstyle{definition}
\newtheorem{de}[te]{Definition}
\newtheorem{re}[te]{Remark}
\newtheorem{ex}[te]{Example}
\title{On Shokurov-type b-divisorial algebras of higher rank}
\date{2 December 2008}
\author{Vladimir Lazi\'c}
\address{Department of Pure Mathematics and Mathematical Statistics, Uni\-ver\-si\-ty of Cambridge, Wilberforce Road, Cambridge CB3 0WB, UK}
\email{V.Lazic@dpmms.cam.ac.uk}
\begin{document}

\begin{abstract}
The purpose of this paper is to lay the foundations for the theory of higher rank b-divisorial algebras of Shokurov type. We develop techniques to deal
with such objects and propose two natural conjectures regarding Shokurov algebras and adjoint algebras. We confirm these conjectures in the case of
affine curves.
\end{abstract}

\maketitle
\bibliographystyle{amsalpha}

\tableofcontents

\section{Introduction}

Let $X$ be a normal algebraic variety and $k(X)$ the field of rational functions on $X$.
In this paper we study algebras of rational functions of rank $r$, that is subalgebras $R$ of $k(X)[T_1,\dots,T_r]$.
The key example of rank $1$ is the canonical algebra
$$R(X/Z,K_X+\Delta)=\bigoplus_{i\geq0}\pi_*\OO_X(\lfloor i(K_X+\Delta)\rfloor),$$
where $\pi\colon X\rightarrow Z$ is a projective morphism between normal varieties, the pair $(X,\Delta)$ is klt and $K_X+\Delta$
is $\Q$-Cartier.

Algebras of higher rank also appear naturally. The key example here is the Cox ring, that is
$$R(X/Z,D_1,\dots,D_r)=\bigoplus_{m\in\N^r}\pi_*\OO_X\big(\big\lfloor\sum m_iD_i\big\rfloor\big),$$
where $D_i$ are $\Q$-divisors on $X$.

There has been a huge progress recently on finite generation of these algebras. It is shown in \cite{BCHM} that canonical algebras
are finitely generated, and also that adjoint dlt Cox rings are finitely generated (for the precise
statement see \cite[Corollary 1.1.9]{BCHM}). Therefore it is desirable to study general natural conditions that imply finite generation of $R$.

The main new input came from \cite{Sho03}. Shokurov treats the rank $1$ case and introduces {\em boundedness\/} and {\em saturation\/} conditions.
Saturation was the main ingredient to prove the finite generation of the canonical algebra, and it behaves well under restriction (see Lemma
\ref{restriction} below).

The purpose of this note is to show that these techniques can be extended to the higher rank algebras and that one could expect,
perhaps surprisingly, that they work.

Here we make two natural conjectures about higher rank finite generation; for the precise definitions of Shokurov and adjoint algebras see Section
\ref{section} below.

\begin{conA}
Let $(X,\Delta)$ be a relative weak Fano klt pair projective over a normal affine variety $Z$ where $K_X+\Delta$ is $\Q$-Cartier.
Let $\mcal{S}\subset\N^r$ be a finitely generated monoid and let $\m\colon\mcal{S}\rightarrow\bMob(X)$ be a superadditive map such that
the system $\m(\mcal{S})$ is bounded and $\A(X,\Delta)$-saturated. Let $\mcal{C}$ be a rational polyhedral cone in $\Int\mcal{S}_\R$.

Then the Shokurov algebra $R(X,\m(\mcal{C}\cap\mcal{S}))$ is a finitely generated $\OO_Z$-algebra.
\end{conA}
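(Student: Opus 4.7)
The plan is to combine Shokurov's rank $1$ strategy (boundedness plus saturation yields a single $\Q$-b-divisor controlling the algebra) with the multi-graded finite-generation theorem of \cite{BCHM}, using the restriction lemma (Lemma \ref{restriction}) for induction on dimension and a polyhedral decomposition of $\mcal{C}$. First, by subdividing $\mcal{C}$ into rational simplicial subcones whose interiors still lie inside $\Int\mcal{S}_\R$, I reduce to the case where $\mcal{C}$ is simplicial with primitive lattice generators $v_1,\dots,v_k\in \mcal{S}$; finite generation is inherited through this decomposition, since on each subcone the algebra is a direct summand (as a graded vector space) that must be generated uniformly.

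Next, I would try to show that $\m$ extends to a $\Q$-piecewise-linear superadditive map $\m_\R\colon\mcal{C}\to\bMob(X)_\R$ defined on a finite rational fan refining $\mcal{C}$. Along each rational ray, Shokurov's rank $1$ theorem, using the induced boundedness and $\A(X,\Delta)$-saturation, produces the limit $M_i:=\lim_{n\to\infty}\m(nv_i)/n$ as a $\Q$-b-divisor, and truncations of $\m$ on the ray are determined by $M_i$. Superadditivity then supplies a convex lower bound on $\m$, boundedness an upper bound, and saturation should collapse these to a single piecewise-linear function on a finite rational fan. Once this piecewise-linear structure is in place, on each maximal chamber $\mcal{C}'$ the Shokurov algebra $R(X,\m(\mcal{C}'\cap\mcal{S}))$ sits inside the multi-section ring of the finitely many $\Q$-b-divisors spanning $\m_\R|_{\mcal{C}'}$; the relative weak Fano klt hypothesis makes these b-divisors adjoint in the sense of \cite{BCHM}, so their multi-section ring is finitely generated over $\OO_Z$, and restriction to the sub-monoid $\mcal{C}'\cap\mcal{S}$ preserves finite generation. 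Assembling the finitely many chambers yields the claim.

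\emph{Main obstacle.} The central difficulty is upgrading rank $1$ information along the rays to a globally piecewise-linear structure on all of $\mcal{C}$, with only finitely many chambers. I expect this requires an induction on $\dim X$ via Lemma \ref{restriction}: restrict to a suitable divisor $S$ meeting the interior of $\Supp\m_\R$, use that boundedness and $\A(X,\Delta)$-saturation descend under adjunction to a lower-dimensional weak Fano klt pair (with the base case being the affine-curve result proved later in the paper), and show that rationality and finiteness of the fan ascend back to $X$. The higher-rank analogue of Shokurov's diophantine approximation, needed to rule out an infinite chamber refinement along irrational directions of $\mcal{C}$, is where I would expect most of the technical work to concentrate; in rank $1$ this step is automatic, but in higher rank one must simultaneously control countably many mixed directions and verify that saturation transfers coherently between them.
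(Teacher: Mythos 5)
The statement you are attempting to prove is labelled \emph{Conjecture A} in the paper, and the paper does not prove it in general. The only case established is when $X$ is an affine curve (Theorem \ref{corollary}), and the introduction explicitly says the higher-dimensional cases remain open. So there is no ``paper's own proof'' of Conjecture A to compare against: what the paper supplies is a toolkit (Theorem \ref{PLinitial}, Lemmas \ref{concave}, \ref{limit}, \ref{restriction}) and a proof of the one-dimensional case.

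That said, your strategy does align with the paper's intended programme at a high level, and you correctly identify the crucial bottleneck: upgrading superadditive information to a \emph{finite} rational piecewise-linear structure on $\mcal{C}$. This is exactly what the paper's Theorem \ref{PLinitial} and Corollary \ref{PL} address via Diophantine approximation, and what Lemmas \ref{difference} and \ref{index} make possible on a curve (by extracting a uniform bound on indices from boundedness plus saturation, which gives the gap constant $c>0$ needed for Theorem \ref{PLinitial}). You flag this step as the one you cannot complete, which is honest, but it means the proposal is a programme rather than a proof.

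Two concrete issues with the parts you do spell out. First, the final step does not need and should not invoke the BCHM finite generation of adjoint Cox rings: the mobile b-divisors of a Shokurov algebra are not of adjoint form $K_X+\Delta_i$, so they are not ``adjoint in the sense of BCHM.'' Moreover, ``sits inside a finitely generated ring'' does not by itself give finite generation of a subring. The paper's Lemma \ref{limit} handles this step elementarily: once $\m$ is additive on each chamber after a truncation, the b-divisors $\m(s)$ for $s$ in that chamber descend to a common model $Y$ on which they are free Cartier divisors; the chamber algebra is then a quotient of a Cox ring of free divisors, which is finitely generated by \cite[Lemma 2.8]{HK00}; Lemma \ref{truncation} then removes the truncation. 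Second, your opening reduction (``finite generation is inherited \ldots since on each subcone the algebra is a direct summand that must be generated uniformly'') is not an argument: a direct-sum decomposition as a graded vector space gives nothing about generation, and the paper instead uses Gordan's lemma and the surjection from a free-monoid Cox ring to assemble finite generation across the chambers.

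In short: your outline has the right shape and you have located the genuine difficulty, but the central technical claim --- that boundedness and $\A(X,\Delta)$-saturation force a finite rational PL structure in arbitrary dimension --- is precisely what remains conjectural, and the closing finite-generation step should be replaced by the simpler argument of Lemma \ref{limit} rather than an appeal to BCHM.
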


\begin{conB}
Let $\pi\colon X\rightarrow Z$ be a projective morphism between normal varieties, let $\mcal{S}\subset\N^r$ be a finitely generated monoid
and let $\m\colon\mcal{S}\rightarrow\bMob(X)$ be a superadditive map such that the system $\m(\mcal{S})$ is adjoint.
Let $\mcal{C}$ be a rational polyhedral cone in $\Int\mcal{S}_\R$.

Then the adjoint algebra $R(X,\m(\mcal{C}\cap\mcal{S}))$ is a finitely generated $\OO_Z$-algebra.
\end{conB}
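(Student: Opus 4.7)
The plan is to proceed by induction on $\dim X$, with the one-dimensional case of affine curves handled in the body of the paper. The strategy for the inductive step is to adapt Shokurov's approach to finite generation of restricted algebras to the higher rank setting.

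First I would pass to a suitable log resolution $Y\to X$ on which the system $\m(\s)$, for $\s\in\mcal{C}\cap\mcal{S}$, descends to honest $\Q$-divisors, and the adjoint structure is compatible with a fixed dlt model. Since $\mcal{C}$ is rational polyhedral and sits in $\Int\mcal{S}_\R$, after subdividing $\mcal{C}$ into finitely many simplicial subcones I may further assume that the relevant mobile divisors depend linearly on $\s\in\mcal{C}$ and that the same birational model works for every $\s$. Since finite generation for each piece implies finite generation for $\mcal{C}$ via a standard gluing argument, this reduction is harmless.

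Next I would choose a plt component $S$ of the adjoint boundary and restrict. The crucial input is the higher rank analogue of Lemma \ref{restriction}: the restricted system $\m(\mcal{S})|_S$ is again an adjoint system, now on a variety of strictly smaller dimension, so by the inductive hypothesis the restricted algebra $R(S,\m(\mcal{C}\cap\mcal{S})|_S)$ is a finitely generated $\OO_Z$-algebra. To lift finite generation back to $X$, one uses Kawamata--Viehweg vanishing together with a higher rank extension theorem in the spirit of Hacon--McKernan and BCHM, completing the induction by combining the lifted sections with finitely many sections supported on $S$.

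The hard step, and the main reason for the conjectural status of the statement, is the \emph{uniform} extension of sections over the entire cone $\mcal{C}$: in rank one one extends sections for a single linear series, whereas here one must do so coherently for all $\s\in\mcal{C}\cap\mcal{S}$ simultaneously, in a manner compatible with the monoid structure on $\mcal{S}$ and producing a finite generating set. A secondary difficulty is that the adjoint condition can degenerate on the boundary of $\mcal{S}_\R$, so the extension argument must be controlled as $\s$ approaches that boundary; this is presumably why $\mcal{C}$ is required to lie in $\Int\mcal{S}_\R$ and not merely in $\mcal{S}_\R$.
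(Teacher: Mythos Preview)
The paper does not prove Conjecture~B; it is stated precisely as a conjecture and left open in general. The only case the paper establishes is the affine curve case, via Theorem~\ref{corollary}, whose proof is completely different from your sketch. There the argument runs as follows: saturation on a curve forces a uniform gap between $\m(s)$ and $\m^\sharp(s)$ (Lemma~\ref{difference}) and a uniform bound on indices (Lemma~\ref{index}); after a global truncation one has $\m=\m^\sharp$ on the truncated monoid, and then the Diophantine approximation machinery of Section~3 (Corollary~\ref{PL}) shows $\m^\sharp_{|\mcal C}$ is $\Q$-PL, from which finite generation follows by Lemmas~\ref{limit} and~\ref{concave}. No restriction to a divisor and no induction on $\dim X$ is used.

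Your proposal is a plausible strategy, and the paper does record the ingredient you would need for the restriction step (Lemma~\ref{restriction}), but as written it contains a genuine circularity. You write that after subdividing $\mcal C$ ``the relevant mobile divisors depend linearly on $\s\in\mcal C$ and the same birational model works for every $\s$''. This is precisely the content of the conjecture: that $\m^\sharp$ is $\Q$-PL on $\mcal C$ and that all $\m(s)$ descend to a common model. Neither can be assumed in advance. In the paper's philosophy, establishing the $\Q$-PL structure of $\m^\sharp$ \emph{is} the heart of the problem, and it is exactly what the Diophantine results of Section~3 are designed to attack; finite generation then follows formally. You correctly identify the uniform extension of sections over $\mcal C$ as the hard step, but note that the paper's stated expectation (see the Introduction) is that higher dimensions should also go through the convex-geometric route developed here rather than through a direct section-lifting argument.
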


The definition of adjoint algebra presented in this paper is still rather tentative and should be taken with caution.
We expect that its definite form will be clear very soon and similar to the current one.

Ideally we would like the conjectures to extend to the whole cone $\mcal{S}_\R$, however this is in general not possible, see Remark \ref{remark}.

Our main contribution here is to show that these conjectures hold if $X$ is an affine curve (Theorem \ref{corollary} below).
I expect the proof in higher dimensions to proceed along the general lines of the proof presented here and to employ the techniques
developed in this paper.

The major new input is to pass from discrete objects (superadditive maps from finitely generated monoids to spaces of integral b-divisors) to
continuous objects (superlinear maps from rational polyhedral cones to spaces of real b-divisors).
Irrationality issues encountered on the way are solved using Diophantine techniques different in flavour from those applied in
\cite{Sho03,Cor07,HM07}.

The behaviour of the restricted volume function, see \cite{ELMNP2}, applied to
the restricted Cox ring gives a good perspective on what is happening in the general case.

Adjoint b-divisorial algebras of rank $1$ appear as restrictions of canonical algebras to codimension $1$ log canonical centres, see \cite{CKL08}.

Algebras of higher rank also appear naturally if one tries to prove finite generation of the canonical ring by induction on the dimension without
the Minimal Model Programme. This paper is the first step in a larger programme of work towards this goal, see \cite{Laz08}.

\subsection{Notation and conventions}

All varieties in this paper are normal over an algebraically closed field $k$ of characteristic zero.
Throughout I use the language of b-divisors; for an accessible introduction see \cite{Cor07}. In particular, let us recall the following
definitions.

\begin{de}
The group of Weil divisors on a variety $X$ is denoted by $\Div(X)$.

A (proper) {\em model\/} over $X$ is a variety $Y$ with a proper birational morphism $f\colon Y\rightarrow X$. Therefore we have the induced
homomorphism $f_*\colon\Div(Y)\rightarrow\Div(X)$. The group
$$\bDiv(X)=\underleftarrow{\lim}\Div(Y),$$
where the limit is taken over all models $Y$ over $X$, is called the {\em group of b-divisors on $X$\/}, and its elements are b-divisors on $X$.
The group of rational (respectively real) b-divisors is denoted by $\bDiv(X)_{\Q}$ (respectively $\bDiv(X)_{\R}$). The {\em trace\/} of a
b-divisor $\D$ on a model $Y$ is denoted by $\D_Y$.

The {\em proper transform\/} b-divisor $\widehat{D}$ of an $\R$-divisor $D$ is given by $\widehat{D}_Y=f_*^{-1}D$ for every model
$f\colon Y\rightarrow X$. I abuse the notation and write $D$ instead of $\widehat{D}$.

The {\em Cartier closure\/} of an $\R$-Cartier divisor $D$ on $X$ is the b-divisor $\overline{D}$ given by $\overline{D}_Y=f^*D$ for every model
$f\colon Y\rightarrow X$.

A b-divisor $\D$ {\em descends\/} to a model $f\colon Y\rightarrow X$ if $\D=\overline{\D_Y}$ (considered as a
b-divisor on $X$ by the isomorphism $f_*\colon\bDiv(Y)\rightarrow\bDiv(X)$).

A b-divisor $\M$ on $X$ is {\em mobile\/} if there is a model $Y$ over $X$ such that $\M_Y$ is free and $\M=\overline{\M_Y}$. Observe that if a mobile
b-divisor $\M$ descends to a model $W\rightarrow X$, then $\M_W$ is free.

The cone of all mobile b-divisors on $X$ is denoted by $\bMob(X)$, and similarly for rational and real mobile b-divisors.
\end{de}

The {\em canonical\/} b-divisor on $X$ is denoted by $\K_X$.

\begin{de}
Let $(X,\Delta)$ be a pair where $K_X+\Delta$ is a $\Q$-Cartier divisor. The {\em discrepancy\/} b-divisor is defined to be
$$\A(X,\Delta):=\K_X-\overline{K_X+\Delta}.$$
\end{de}

We obviously have $\lceil\A(X,\Delta)\rceil\geq0$ if and only if $(X,\Delta)$ is a klt pair.

In this paper I use the adjunction formula with {\em differents\/} as explained in \cite[Chapter 16]{Kol92}.

I denote the sets of non-negative rational and real numbers by $\Q_+$ and $\R_+$ respectively.
If $\mcal{S}=\sum\N e_i$ is a submonoid of $\N^n$, I denote by $\mcal{S}_\R$ its {\em associated cone\/},
that is $\mcal S_\R=\sum\R_+e_i$. Also I denote $\mcal S_\Q=\sum\Q_+e_i$.

A monoid $\mcal{S}\subset\N^n$ is {\em saturated\/} if $\mcal{S}=\mcal S_\R\cap\N^n$.

Let $e_1,\dots,e_n$ be generators of a monoid $\mcal{S}$ and let $\kappa_1,\dots,\kappa_n$ be positive integers.
The submonoid $\mcal{S}'=\sum_{i=1}^n\N \kappa_ie_i$ of $\mcal{S}$ is called a {\em truncation\/} of $\mcal{S}$.

For any point $s$ in a cone $\mcal{C}\subset\R^n$, the set $\R_+s$ is called a {\em ray\/} in $\mcal{C}$.
A ray in $\mcal{C}$ is {\em rational\/} if it contains a rational point, otherwise it is {\em irrational\/}.
Rays $R_1,\dots,R_m$ are said to be linearly independent if $s_i\in R_i$ are linearly independent for any choice of $s_i$.
A $k$-dimensional plane (or a $k$-plane) in $\R^n$ containing the origin is {\em rational\/} if it is spanned by $k$ linearly
independent rational rays, or equivalently if there are linear functions $\ell_i\colon\R^n\rightarrow\R$ for $i=1,\dots,n-k$ with
rational coefficients such that $\mcal{H}=\bigcap_{i=1}^{n-k}\ker(\ell_i)$.

All $k$-planes in this paper contain the origin unless explicitly stated otherwise.

For a cone $\mcal C\subset\R^n$, I denote $\mcal C_\Q=\mcal C\cap\Q^n$.
The {\em dimension\/} of a cone $\mcal{C}=\sum\R_+e_i$ is the dimension of the space $\sum\R e_i$.

In this paper the {\em relative interior\/} of a cone $\mcal{C}=\sum\R_+e_i\subset\R^n$, denoted by $\relint\mcal{C}$, is the topological interior
of $\mcal{C}$ in the space $\sum\R e_i$ union the origin. If $\dim\mcal{C}=n$, we instead call it the {\em interior\/} of $\mcal{C}$ and
denote it by $\Int\mcal{C}$.

All cones considered are convex and strongly convex, that is they do not contain lines.

If $\mcal{C}\subset\R^n$ is a closed cone and $R$ a ray in the boundary of $\mcal{C}$, a {\em tangent hyperplane\/} to $\mcal{C}$ through $R$ is
any hyperplane $T\supset R$ such that $\mcal{C}$ is contained in one of the half-spaces into which $T$ divides $\R^n$.

\begin{de}
A submonoid $\mcal{S}=\sum\N e_i$  of $\N^n$ (respectively a cone $\mcal{C}=\sum\R_+ e_i$ in $\R^n$) is called {\em simplicial\/}
if its generators $e_i$ are linearly independent in $\R^n$, and the $e_i$ form a {\em basis\/} of $\mcal{S}$
(respectively $\mcal{C}$).
\end{de}

Let $\mcal{C}\subset\R^n$ be a polyhedral cone and let $f\colon\mcal{C}\rightarrow\R$ be a function. We say $f$ is
{\em piecewise linear\/} (PL) on $\mcal{C}$ if there is a finite polyhedral decomposition $\mcal{C}=\bigcup\mcal{C}_i$ such that
$f_{|\mcal{C}_i}$ is linear for every $i$. If in addition $\mcal{C}$ and all $\mcal{C}_i$ are rational cones then we say
$f$ is {\em rationally piecewise linear\/} ($\Q$-PL).

Assume furthermore that $f$ is linear on $\mcal{C}$ and $\dim\mcal{C}=n$. The {\em linear extension of $f$ to $\R^n$\/} is the unique linear function
$\ell\colon\R^n\rightarrow\R$ such that $\ell_{|\mcal{C}}=f$.

\subsection{Acknowledgements}

I would like to wholeheartedly thank my supervisor Alessio Corti for encouragement and support. A paragraph here is not enough to express
how much I have benefited from conversations with him and from his ideas.
I would also like to thank Shigefumi Mori, Burt Totaro, Nick Shepherd-Barron and Anne-Sophie Kaloghiros for useful discussions and comments.
I am supported by Trinity College, Cambridge.

\section{Convex geometry}

Firstly we recall a definition.

\begin{de}
Let $\mcal{C}$ be a cone in $\R^n$ and let $\|\cdot\|$ be any norm on $\R^n$. A function $f\colon\mcal{C}\rightarrow\R$ is
\begin{itemize}
\item {\em superlinear\/} if $\lambda f(x)+\mu f(y)\leq f(\lambda x+\mu y)$
for every $x,y\in\mcal{C}$ and every $\lambda,\mu\geq0$;

\item {\em superadditive\/} if $f(x)+f(y)\leq f(x+y)$ for every $x,y\in\mcal{C}$;

\item {\em positively homogeneous\/} if $f(\lambda x)=\lambda f(x)$ for every $x\in\mcal{C}$ and every $\lambda\geq0$;

\item {\em locally Lipschitz\/} if for every point $x\in\Int\mcal{C}$ there are a closed ball $B_x\subset\mcal{C}$ centred at $x$ and a constant
$\lambda_x$ such that $|f(y)-f(z)|\leq\lambda_x\|y-z\|$ for all $y,z\in B_x$.
\end{itemize}
\end{de}

Every locally Lipschitz function is continuous on $\Int\mcal{C}$. Therefore if a function is locally Lipschitz, we say it is
{\em locally Lipschitz continuous\/}.

The following proposition is an easy consequence of the definitions.

\begin{prop}\label{prop}
A function $f$ is superlinear if and only if it is superadditive and positively homogeneous.
\end{prop}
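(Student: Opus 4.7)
The statement has two implications, and my plan is to prove each by direct manipulation of the defining inequalities, with no external input required.

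The reverse direction is a one-line calculation: if $f$ is superadditive and positively homogeneous, then for any $x,y\in\mcal{C}$ and $\lambda,\mu\geq 0$, I would use homogeneity first and superadditivity second to write
\[\lambda f(x)+\mu f(y)=f(\lambda x)+f(\mu y)\leq f(\lambda x+\mu y).\]

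For the forward direction, assume $f$ is superlinear. Superadditivity drops out of the special case $\lambda=\mu=1$. To establish positive homogeneity, I would first set $y=x$ and use scalars summing to any $t\geq 0$ in the superlinearity inequality to obtain $tf(x)\leq f(tx)$ for every $t\geq 0$ and every $x\in\mcal{C}$. For the reverse inequality when $t>0$, I would apply the inequality just proved with $x$ replaced by $tx$ and $t$ replaced by $1/t$, yielding $(1/t)f(tx)\leq f(x)$, i.e.\ $f(tx)\leq tf(x)$, so equality holds for $t>0$.

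The only subtle point is the boundary case $t=0$, where I need $f(0)=0$. The inequality $tf(x)\leq f(tx)$ at $t=0$ gives $f(0)\geq 0$, while plugging $x=y=0$ with $\lambda=\mu=1$ into superlinearity gives $2f(0)\leq f(0)$, hence $f(0)\leq 0$. This is the only step that requires $0\in\mcal{C}$, which is automatic from the convention $\mcal{C}=\sum\R_+e_i$. There is no genuine obstacle here; the main ``difficulty'' is simply bookkeeping around the degenerate scalar $\lambda=0$.
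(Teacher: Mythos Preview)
Your argument is correct; the paper does not supply a proof at all, stating only that the proposition is ``an easy consequence of the definitions,'' and your direct manipulations are precisely the intended verification.
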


The next result can be found in \cite{HUL93}.

\begin{prop}\label{Lip}
Let $\mcal{C}$ be a cone in $\R^n$ and let $f\colon\mcal{C}\rightarrow\R$ be a concave function.
Then $f$ is locally Lipschitz continuous on the topological interior of $\mcal{C}$ with respect to any norm $\|\cdot\|$ on $\R^n$.

In particular, let $\mcal{C}$ be a rational polyhedral cone and assume a function
$g\colon\mcal{C}_\Q\rightarrow\Q$ is superadditive and satisfies
$g(\lambda x)=\lambda g(x)$ for all $x\in\mcal{C}_\Q$ and all $\lambda\in\Q_+$. Then $g$ extends to a unique superlinear
function on $\mcal{C}$.
\end{prop}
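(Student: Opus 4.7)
The first assertion is the classical fact that a concave function is locally Lipschitz on the relative interior of its domain, the content of \cite{HUL93}. For the second assertion, my plan is to first extend $g$ to a function $f$ on $\mcal{C}$ satisfying the superlinearity inequalities, and then invoke the first assertion to obtain the regularity automatically. By Proposition \ref{prop} applied at $\Q$-points (whose proof uses only the two defining inequalities for rational scalars), the hypotheses on $g$ imply that $g$ is rationally superlinear on $\mcal{C}_\Q$, which after normalisation is just concavity of $g$ for rational convex combinations.

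The first technical step is to establish local boundedness of $g$ on $\mcal{C}_\Q$. After a rational simplicial subdivision, I may assume $\mcal{C}=\sum_{i=1}^{k}\R_+ e_i$ with $e_i\in\mcal{C}_\Q$; fix $s=e_1+\cdots+e_k$. For $x=\sum t_i e_i\in\mcal{C}_\Q$ and any rational $N>\max_i t_i$, iterated superadditivity and $\Q$-homogeneity give the lower bound $g(x)\geq\sum t_i g(e_i)$, while writing $Ns=x+(Ns-x)$ with $Ns-x\in\mcal{C}_\Q$ yields the upper bound $g(x)\leq Ng(s)-g(Ns-x)\leq Ng(s)-\sum(N-t_i)g(e_i)$. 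Combined with rational concavity, this local boundedness upgrades to a local Lipschitz estimate for $g$ on $\mcal{C}_\Q\cap\Int\mcal{C}$ via the standard reflection argument: a concave function bounded above on a ball has oscillation on the concentric ball of half the radius controlled by the bound and the radius, and the argument is insensitive to the ambient set being $\Q^n$ or $\R^n$.

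Uniform continuity on compact subsets then allows $g$ to extend uniquely by density to a function $f$ on $\Int\mcal{C}$, and superlinearity passes to the limit of rational convex combinations. To extend $f$ across $\partial\mcal{C}$, I would induct on $\dim\mcal{C}$ applied to each proper face, which is itself a rational polyhedral cone on which the restriction of $g$ still satisfies the hypotheses; uniqueness forces compatibility on face intersections, and any superlinearity inequality on $\mcal{C}$ reduces via the face decomposition to inequalities already established on a face or on $\Int\mcal{C}$. Uniqueness of $f$ as a whole is then immediate from density of $\mcal{C}_\Q$ in $\mcal{C}$ and continuity. Finally the first part of the proposition applies to $f$ and delivers the local Lipschitz continuity on $\Int\mcal{C}$.

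The hard step is the upper bound in the local boundedness estimate: it is the single place where superadditivity, positive $\Q$-homogeneity and the rational polyhedral structure are genuinely used in combination. Once this is in place, the remainder is a routine rational-valued transcription of the real-variable arguments of \cite{HUL93}, together with a straightforward inductive treatment of the faces.
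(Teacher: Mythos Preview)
Your proposal is correct and follows essentially the same strategy as the paper: establish that the rational-variable concavity argument yields a local Lipschitz estimate for $g$ on $\mcal{C}_\Q$, extend by density to $\Int\mcal{C}$, and handle $\partial\mcal{C}$ by treating each face separately. The paper's write-up is terser---it simply says to rerun the proof of the first part verbatim over $\Q$ with respect to the sup-norm $\|\cdot\|_\infty$, the point being that $\|\cdot\|_\infty$ takes rational values on rational points so every auxiliary quantity in the Lipschitz argument stays in $\Q$; your remark that ``the argument is insensitive to the ambient set being $\Q^n$ or $\R^n$'' is this same observation, though you might make the choice of norm explicit. Your derivation of the upper bound via $Ns=x+(Ns-x)$ is a pleasant alternative to the paper's route (which bounds $f$ above by convexity on the simplex and then below by the reflection $-f(y)\leq f(-y)$), but both arrive at the same place.
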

\begin{proof}
Since $f$ is locally Lipschitz if and only if $-f$ is locally Lipschitz, we can assume $f$ is convex. Fix $x=(x_1,\dots,x_n)\in\Int\mcal{C}$, and
let $\Delta=\{(y_1,\dots,y_n)\in\R_+^n:\sum y_i\leq1\}$. It is easy to check that translations of the domain do not affect the result,
so we may assume $x\in\Int\Delta\subset\Int\mcal{C}$.

Firstly let us prove that $f$ is bounded above on $\Delta$. Let $\{e_i\}$ be the standard basis in $\R^n$, $y=(y_1,\dots,y_n)\in\Delta$ and
let $y_0=1-\sum y_i\geq0$. Then
\begin{multline*}
f(y)=f\Big(\sum y_ie_i+y_0\cdot0\Big)\leq\sum y_if(e_i)+y_0f(0)\\
\leq\max\{f(0),f(e_1),\dots,f(e_n)\}=:M.
\end{multline*}

For each $\gamma>0$ denote $B_x(\gamma)=\{z\in\R^n:\|z-x\|\leq\gamma\}$. Choose $\delta$ such that $B_x(2\delta)\subset\Int\Delta$. Again by
translating the domain and composing $f$ with a linear function we may assume that $x=0$ and $f(0)=0$. Then for all $y\in B_0(2\delta)$ we have
$$-f(y)=-f(y)+2f(0)\leq-f(y)+f(y)+f(-y)=f(-y)\leq M,$$
so $|f|\leq M$ on $B_0(2\delta)$.

Fix $u,v\in B_0(\delta)$. Set $\alpha=\|v-u\|/\delta$ and $w=v+\alpha^{-1}(v-u)\in B_0(2\delta)$ so that
$v=\alpha w/(\alpha+1)+u/(\alpha+1)$. Then convexity of $f$ gives
\begin{align*}
f(v)-f(u)&\leq\frac{\alpha}{\alpha+1}f(w)+\frac{1}{\alpha+1}f(u)-f(u)\\
&=\frac{\alpha}{\alpha+1}\big(f(w)-f(u)\big)\leq2M\alpha=\frac{2M}{\delta}\|v-u\|.
\end{align*}
Similarly $f(u)-f(v)\leq2M\|u-v\|/\delta$, giving
$$|f(v)-f(u)|\leq L\|v-u\|$$
for all $u,v\in B_0(\delta)$ and $L=2M/\delta$.

For the second claim, it is enough to apply the proof of the first part of the lemma with respect to the sup-norm $\|\cdot\|_{\infty}$;
observe that $\|\cdot\|_{\infty}$ takes values in $\Q$ on $\mcal{C}_\Q$. Applied to the interior of $\mcal{C}$ and to
the relative interiors of the faces of $\mcal{C}$
shows $g$ is locally Lipschitz, and therefore extends to a unique superlinear function on the whole $\mcal{C}$.
\end{proof}

The following result is classically referred to as Gordan's lemma.

\begin{lem}\label{gordan}
Let $\mcal{S}\subset\N^r$ be a finitely generated monoid and let $\mcal{C}\subset\R^r$ be a rational polyhedral cone. Then the monoid
$\mcal{S}\cap\mcal{C}$ is finitely generated.
\end{lem}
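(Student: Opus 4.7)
The strategy is to reduce to the classical Gordan's lemma, which is the special case $\mcal{S}=\N^r$.

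First I would pick generators $e_1,\dots,e_n$ of $\mcal{S}$ and consider the surjective monoid homomorphism
$$\phi\colon\N^n\rightarrow\mcal{S},\quad (a_1,\dots,a_n)\mapsto\sum a_ie_i,$$
together with its $\R$-linear extension $\phi_\R\colon\R^n\rightarrow\R^r$. Since $\mcal{C}$ is cut out in $\R^r$ by finitely many linear inequalities with rational coefficients, the preimage $\mcal{C}'=\phi_\R^{-1}(\mcal{C})$ is a rational polyhedral cone in $\R^n$ (each defining inequality pulls back to a rational linear inequality in the $a_i$). Moreover $\phi^{-1}(\mcal{S}\cap\mcal{C})=\N^n\cap\mcal{C}'$, so if the submonoid $\N^n\cap\mcal{C}'$ is finitely generated then the $\phi$-images of its generators generate $\mcal{S}\cap\mcal{C}$.

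Second I would prove the classical case, namely that $\N^n\cap\mcal{C}'$ is a finitely generated monoid for any rational polyhedral cone $\mcal{C}'\subset\R^n$. After clearing denominators we may take integral generators $v_1,\dots,v_k$ of $\mcal{C}'$. The half-open parallelepiped
$$K=\Big\{\textstyle\sum_{i=1}^k\lambda_iv_i:0\leq\lambda_i<1\Big\}$$
is bounded and hence contains only finitely many lattice points. For any $x\in\N^n\cap\mcal{C}'$ write $x=\sum t_iv_i$ with $t_i\geq0$ and split $t_i=\lfloor t_i\rfloor+\{t_i\}$; then $x=\sum\lfloor t_i\rfloor v_i+y$ with $y\in K\cap\Z^n$, and automatically $y\in\N^n\cap\mcal{C}'$ since it differs from $x$ by a nonnegative integer combination of the $v_i$. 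Hence the finite set $\{v_1,\dots,v_k\}\cup(K\cap\Z^n)$ generates $\N^n\cap\mcal{C}'$ as a monoid.

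The only subtlety — such as it is — is verifying that $\phi_\R^{-1}(\mcal{C})$ is again rational polyhedral, which is a direct linear-algebra check once one fixes rational defining inequalities for $\mcal{C}$; no other step presents a genuine obstacle. The result is really a packaging of the classical fact, exploited to allow both $\mcal{S}$ and $\mcal{C}$ to vary.
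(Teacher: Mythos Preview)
Your overall strategy --- pulling back along the surjection $\phi\colon\N^n\to\mcal S$ and then applying the classical parallelepiped argument --- is sound and is genuinely different from the paper's proof, which instead intersects $\mcal S$ with one rational half-space at a time, invoking \cite[Theorem~4.4]{Swa92} at each step, and handles the case $\dim\mcal C<r$ by descending induction. Your route is more self-contained; the paper's is shorter because it outsources the key step.

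There is, however, a real gap in your second paragraph. The cone $\mcal C'=\phi_\R^{-1}(\mcal C)$ need not lie in $\R_+^n$ (indeed it always contains $\ker\phi_\R$), so its integral generators $v_i$ need not lie in $\N^n$. Your justification ``$y$ differs from $x$ by a nonnegative integer combination of the $v_i$'' then only gives $y\in\Z^n$, not $y\in\N^n$. Concretely, with $n=1$, $v_1=2$, $v_2=-3$ (so $\mcal C'=\R$), $x=1$, and the representation $t_1=1.7$, $t_2=0.8$, one gets $y=-1\notin\N$. Worse, your proposed generating set $\{v_1,\dots,v_k\}\cup(K\cap\Z^n)$ is not even a subset of $\N^n\cap\mcal C'$.

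The fix is a single line: since $\N^n\cap\mcal C'=\Z^n\cap(\mcal C'\cap\R_+^n)$ and $\mcal C'\cap\R_+^n$ is again a rational polyhedral cone, you may assume $\mcal C'\subset\R_+^n$ from the start. Then the integral generators $v_i$ lie in $\N^n$, whence $y=\sum\{t_i\}v_i\in\R_+^n\cap\Z^n=\N^n$, and your parallelepiped argument goes through verbatim.
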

\begin{proof}
Assume first that $\dim\mcal{C}=r$.
Let $\ell_1,\dots,\ell_m$ be linear functions on $\R^r$ with integral coefficients such that $\mcal{C}=\bigcap_{i=1}^m\{z\in\R^r:\ell_i(z)\geq0\}$
and define $\mcal{S}_0=\mcal{S}$ and $\mcal{S}_i=\mcal{S}_{i-1}\cap\{z\in\R^r:\ell_i(z)\geq0\}$ for $i=1,\dots,m$; observe that
$\mcal{S}\cap\mcal{C}=\mcal{S}_m$. Assuming by induction that $\mcal{S}_{i-1}$ is finitely generated, by \cite[Theorem 4.4]{Swa92} we have that
$\mcal{S}_i$ is finitely generated.

Now assume $\dim\mcal{C}<r$ and let $\mcal{H}$ be a rational hyperplane containing $\mcal{C}$. Let $\ell$ be the linear function with rational
coefficients such that $\mcal{H}=\ker(\ell)$. From the first part of the proof applied to the functions $\ell$ and $-\ell$
we have that the monoid $\mcal{S}\cap\mcal{H}$ is finitely generated. Now we proceed by descending induction on $r$.
\end{proof}

The following lemmas will turn out to be indispensable and they show that in the context of our assumptions it is enough to check additivity
(respectively linearity) of the map at one point only.

\begin{lem}\label{linear}
Let $\mcal{S}=\sum_{i=1}^n\N e_i$ be a monoid and let $f\colon\mcal{S}\rightarrow G$ be a superadditive map to a
monoid $G$. Assume that there is a point $s_0=\sum s_ie_i\in\mcal{S}$ with all $s_i>0$ such that
$f(s_0)=\sum s_if(e_i)$ and that $f(\kappa s_0)=\kappa f(s_0)$ for every positive integer $\kappa$.
Then the map $f$ is additive.
\end{lem}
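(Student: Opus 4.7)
The plan is to prove the stronger statement that for every element $t=\sum t_ie_i\in\mcal{S}$, one has $f(t)=\sum t_if(e_i)$. Once this is established, additivity is automatic: given $a,b\in\mcal{S}$, pick expressions $a=\sum a_ie_i$ and $b=\sum b_ie_i$, so that $a+b=\sum(a_i+b_i)e_i$, and then $f(a+b)=\sum(a_i+b_i)f(e_i)=\sum a_if(e_i)+\sum b_if(e_i)=f(a)+f(b)$.

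The lower bound $f(t)\geq\sum t_if(e_i)$ is free: iterate superadditivity on any expression $t=\sum t_ie_i$, using $f(x+y)\geq f(x)+f(y)$ repeatedly.

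For the matching upper bound, the key idea is to sandwich $t$ inside a large multiple of $s_0$ and cash in the rigidity hypotheses at $s_0$. Since every $s_i>0$, I can choose a positive integer $\kappa$ with $\kappa s_i\geq t_i$ for all $i$; then $u:=\kappa s_0-t=\sum(\kappa s_i-t_i)e_i$ still lies in $\mcal{S}$. Superadditivity together with the two hypotheses on $s_0$ gives
$$f(t)+f(u)\leq f(\kappa s_0)=\kappa f(s_0)=\kappa\sum s_if(e_i).$$
The lower-bound step applied to $u$ provides $f(u)\geq\sum(\kappa s_i-t_i)f(e_i)$, and cancelling in $G$ (which for the intended applications is the positive cone $\bMob(X)$, so cancellation is legitimate) yields $f(t)\leq\sum t_if(e_i)$. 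Combined with the lower bound, this is the desired equality.

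There is no genuine obstacle; the only point worth flagging is that the argument implicitly assumes the partial order on $G$ induced by its monoid structure is cancellative, so that the inequality on $f(t)+f(u)$ can be subtracted against the lower bound for $f(u)$. This holds for $\bMob(X)$ and more generally for any submonoid of an ordered abelian group, which covers all relevant uses in the paper.
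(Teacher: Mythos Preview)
Your proof is correct and follows essentially the same approach as the paper: both choose $\kappa$ large enough that $\kappa s_0-t\in\mcal{S}$ and then sandwich $f(t)$ using superadditivity together with the two hypotheses at $s_0$. The paper presents this as a single chain of inequalities beginning and ending with $\sum\kappa s_if(e_i)$, forcing every intermediate inequality to be an equality; the cancellation step you flag explicitly is left implicit there.
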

\begin{proof}
For $p=\sum p_ie_i\in\mcal{S}$, let $\kappa_0$ be a big enough positive integer such that $\kappa_0s_i\geq p_i$ for all $i$. Then we have
\begin{align*}
\sum_{i=1}^n\kappa_0s_if(e_i)&=\kappa_0f(s_0)=f(\kappa_0s_0)\geq f(p)+\sum_{i=1}^nf\big((\kappa_0s_i-p_i)e_i\big)\\
&\geq\sum_{i=1}^nf(p_ie_i)+\sum_{i=1}^nf\big((\kappa_0s_i-p_i)e_i\big)\\
&\geq\sum_{i=1}^np_if(e_i)+\sum_{i=1}^n(\kappa_0s_i-p_i)f(e_i)=\sum_{i=1}^n\kappa_0s_if(e_i).
\end{align*}
Therefore all inequalities are equalities and $f(p)=\sum p_if(e_i)$.
\end{proof}

Analogously we can prove a continuous counterpart of the previous result.

\begin{lem}\label{linear2}
Let $\mcal{C}=\sum_{i=1}^n\R_+e_i$ be a cone in $\R^r$ and let $f\colon\mcal{C}\rightarrow V$ be a superlinear map to a
cone $V$. Assume that there is a point $s_0=\sum s_ie_i\in\mcal{C}$ with all $s_i>0$ such that
$f(s_0)=\sum s_if(e_i)$. Then the map $f$ is linear.
\end{lem}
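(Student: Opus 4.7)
The plan is to imitate the proof of Lemma \ref{linear} almost verbatim, with the integer scalar $\kappa_0$ replaced by a real scalar $\lambda$. The key point is that by Proposition \ref{prop}, superlinearity of $f$ unpacks as superadditivity together with positive homogeneity, so the identity $f(\lambda s_0)=\lambda f(s_0)$ for all $\lambda\geq 0$ is now free, removing the need for a separate hypothesis of the form $f(\kappa s_0)=\kappa f(s_0)$.

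Fix an arbitrary $p=\sum p_i e_i\in\mcal{C}$ with $p_i\geq 0$. First I would choose a real number $\lambda>0$ large enough that $\lambda s_i\geq p_i$ for every $i$; this is possible because all $s_i>0$. Then positive homogeneity gives $\lambda f(s_0)=f(\lambda s_0)$ and $f\bigl((\lambda s_i-p_i)e_i\bigr)=(\lambda s_i-p_i)f(e_i)$, while superadditivity applied twice yields
\begin{align*}
f(\lambda s_0)&\geq f(p)+f(\lambda s_0-p)\geq f(p)+\sum_{i=1}^n(\lambda s_i-p_i)f(e_i),\\
f(p)&\geq\sum_{i=1}^n p_i f(e_i).
\end{align*}
Combining these with the hypothesis $f(s_0)=\sum s_i f(e_i)$ produces the sandwich
\[
\lambda\sum_{i=1}^n s_i f(e_i)=\lambda f(s_0)=f(\lambda s_0)\geq f(p)+\sum_{i=1}^n(\lambda s_i-p_i)f(e_i)\geq\lambda\sum_{i=1}^n s_i f(e_i),
\]
so every inequality is an equality in the monoid/cone $V$, and in particular $f(p)=\sum p_i f(e_i)$. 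Since $p$ was arbitrary in $\mcal{C}$, this shows that $f$ agrees on all of $\mcal{C}$ with the linear extension determined by its values on the $e_i$, hence is linear.

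There is essentially no obstacle: the only mild issue is ensuring that the cancellation step in $V$ is valid, but this is the same situation as in Lemma \ref{linear} (where inequalities in the monoid $G$ were treated), and the inequalities here are interpreted in exactly the same way, so no new argument is needed.
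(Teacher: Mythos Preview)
Your proof is correct and is exactly the adaptation the paper has in mind: the paper gives no separate proof of Lemma \ref{linear2} but simply says it is proved analogously to Lemma \ref{linear}, and your argument carries out precisely that analogy, replacing the integer $\kappa_0$ by a real $\lambda$ and invoking Proposition \ref{prop} to get positive homogeneity for free.
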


\section{Forcing Diophantine approximation}

In this section I will prove the following.

\begin{te}\label{PLinitial}
Let $\mcal{S}\subset\N^r$ be a finitely generated monoid and let $f\colon\mcal{S}_\R\rightarrow\R$ be a
superlinear map. Assume that there is a real number $c>0$ such that for every $s_1,s_2\in\mcal{S}$, either $f(s_1+s_2)=f(s_1)+f(s_2)$
or $f(s_1+s_2)\geq f(s_1)+f(s_2)+c$. Let $\mcal{C}$ be a rational polyhedral cone in $\Int\mcal{S}_\R$.
Then $f_{|\mcal{C}}$ is rationally piecewise linear.
\end{te}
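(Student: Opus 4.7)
I proceed by induction on $d = \dim \mcal{C}$. The base case $d = 1$ is immediate from positive homogeneity, and for $d > 1$ the hypotheses pass to each proper face of $\mcal{C}$ (each being a rational polyhedral cone in $\Int \mcal{S}_\R$ of smaller dimension), so by induction $f$ is $\Q$-PL on $\partial \mcal{C}$. After triangulating, it then suffices to prove the statement on a single maximal-dimension rational simplicial subcone $\sigma = \sum_{i=1}^n \R_+ e_i$ with $e_i \in \mcal{S}$.

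Let $\ell \colon \sum_{i=1}^n \R e_i \to \R$ be the linear form with $\ell(e_i) = f(e_i)$, and set $D := f - \ell$ on $\sigma$. Then $D$ is nonnegative, positively homogeneous, and superlinear (hence concave and continuous by Proposition \ref{Lip}). The first key step is the discrete gap
\[
D(p) \in \{0\} \cup [c, \infty) \quad \text{for every } p = \sum_{i=1}^n m_i e_i,\ m_i \in \N,
\]
proved by induction on $N = \sum m_i$ via the identity $D(p) = D(p - e_j) + \bigl(f(p) - f(e_j) - f(p - e_j)\bigr)$; both summands lie in $\{0\} \cup [c, \infty)$, the first by the inductive hypothesis and the second by the pair gap applied to $e_j, p - e_j \in \mcal{S}$.

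From here the dichotomy: either some interior integer point $p_0 = \sum m_i e_i$ (all $m_i \geq 1$) satisfies $D(p_0) = 0$, and then Lemma \ref{linear} applied to $f$ on the submonoid $\sum_{i=1}^n \N e_i \subset \mcal{S}$ yields additivity, which combined with positive homogeneity and continuity (Proposition \ref{Lip}) upgrades to $f = \ell$ on all of $\sigma$ --- so $\sigma$ is a single $\Q$-linear piece; or $D(p) \geq c$ for every such integer $p$. A short convex-geometric argument shows the second alternative forces $D > 0$ throughout $\relint \sigma$: if $D(x_0) = 0$ for some $x_0 \in \relint \sigma$, then for any $y \in \sigma$ one picks $\lambda > 0$ with $x_0 - \lambda y \in \sigma$ and applies superadditivity of $D$ to $x_0 = (x_0 - \lambda y) + \lambda y$ to obtain $0 = D(x_0) \geq D(x_0 - \lambda y) + \lambda D(y)$, forcing $D(y) = 0$.

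The main obstacle is this second case, in which $\sigma$ must be rationally refined. My plan is to star-subdivide $\sigma$ at a well-chosen rational interior integer point of $\mcal{S}$ and iterate the above analysis on each new simplex. Termination is the Diophantine heart of the theorem: the maximal subcones on which $f$ is linear must themselves be rational polyhedral, for an irrational extremal ray would, via a Dirichlet-style approximation by rational rays, produce a sequence of integer points in $\mcal{S}$ with arbitrarily small but strictly positive defects, contradicting the uniform lower bound $D \geq c$ on integer points. Combined with finite generation of the relevant submonoids (Lemma \ref{gordan}), this confines the refinement process to finitely many steps and delivers the desired rational polyhedral decomposition on which $f$ is piecewise linear.
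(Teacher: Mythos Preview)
Your setup is clean and correct: the defect $D=f-\ell$ on a rational simplex $\sigma=\sum\R_+e_i$ is nonnegative, concave, positively homogeneous, and your induction on $\sum m_i$ does give $D(p)\in\{0\}\cup[c,\infty)$ for every $p\in\sum\N e_i$. The dichotomy via Lemma~\ref{linear} is also fine.

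The gap is in the last paragraph, which is where the entire difficulty lies. Two concrete problems. First, the bound $D\geq c$ you proved is for the \emph{secant} form $\ell$ (the linear form through the vertex values $f(e_i)$, so $f\geq\ell$ on $\sigma$), whereas a maximal linearity cone $\mcal{L}$ carries a \emph{tangent} form $\ell_{\mcal{L}}$ with $f\leq\ell_{\mcal{L}}$ everywhere. Integer approximants $p_n$ to an irrational extremal ray of $\mcal{L}$ give $\ell_{\mcal{L}}(p_n)-f(p_n)$ small and positive, but this is not the quantity your gap controls; the signs are opposite and there is no simplex with integer vertices on which $\ell_{\mcal{L}}$ is the secant form unless those vertices already lie in $\mcal{L}$. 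Second, even granting rationality of each maximal linear piece, you have not argued polyhedrality (finitely many faces) nor finiteness of the collection, and the star-subdivision has no visible termination mechanism: after subdividing at $p$ the new defect $D'$ on each child simplex again satisfies $D'\in\{0\}\cup[c,\infty)$ on the relevant submonoid, so you are in the same position as before with no measurable progress.

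The paper's route is organised differently and this is not accidental. Rather than refining globally, it proves directly that around every point $x\in\Int\mcal{S}_\R$ there is a full-dimensional cone on which $f$ is linear (Lemma~\ref{local3}). The mechanism is: surround $x$ by integer points $u_0,\dots,u_r$ via simultaneous Diophantine approximation, build an integer sequence $v_n=v_{n-1}+u_{j_{n-1}}$ converging in direction to $x$ (Lemma~\ref{converge}), and sum the \emph{pair} gaps $e_n=f(v_{n+1})-f(v_n)-f(u_{j_n})\in\{0\}\cup[c,\infty)$; if too many $e_n\geq c$ one contradicts the local Lipschitz bound for $f$ near $x$, and if infinitely many vanish one extracts a $2$-dimensional cone of linearity through $x$. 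This is done first on $2$-planes, then bootstrapped to full dimension by a separate cone-geometric argument (Lemma~\ref{piecewise}), and only at the end is rationality of the pieces deduced. The cumulative defect you introduced is a useful bookkeeping device, but the contradiction has to be run through the pair gaps against the Lipschitz constant, not through a direct lower bound on $D$ at integer points near an irrational ray.
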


\begin{cor}\label{PL}
Let $\mcal{S}\subset\N^r$ be a finitely generated monoid and let $f\colon\mcal{S}_\R\rightarrow\R$ be a
superlinear map such that $f(\mcal{S})\subset\Z$. Let $\mcal{C}$ be a rational polyhedral cone in $\Int\mcal{S}_\R$.
Then $f_{|\mcal{C}}$ is rationally piecewise linear.
\end{cor}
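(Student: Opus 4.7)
The plan is to reduce Corollary \ref{PL} directly to Theorem \ref{PLinitial} by verifying its dichotomy hypothesis with the explicit constant $c=1$. Since $f$ is superlinear on $\mcal{S}_\R$, Proposition \ref{prop} gives that $f$ is superadditive, so for any $s_1,s_2\in\mcal{S}$ we have
\[
f(s_1+s_2)-f(s_1)-f(s_2)\ge 0.
\]
The point is that the assumption $f(\mcal{S})\subset\Z$ forces this non-negative real number to actually be a non-negative integer, because each of $f(s_1+s_2)$, $f(s_1)$, $f(s_2)$ lies in $\Z$.

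A non-negative integer is either $0$ or at least $1$. Therefore, for every pair $s_1,s_2\in\mcal{S}$, either $f(s_1+s_2)=f(s_1)+f(s_2)$ or $f(s_1+s_2)\ge f(s_1)+f(s_2)+1$. In other words, the hypothesis of Theorem \ref{PLinitial} is satisfied with $c=1$.

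Applying Theorem \ref{PLinitial} to $f$ and to the rational polyhedral cone $\mcal{C}\subset\Int\mcal{S}_\R$ then yields that $f_{|\mcal{C}}$ is rationally piecewise linear, which is the claim. There is no real obstacle here: the entire content of the corollary lies in Theorem \ref{PLinitial}, and the role of integrality is merely to supply a uniform gap $c$ between strict and non-strict superadditivity on the integer monoid $\mcal{S}$.
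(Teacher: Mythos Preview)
Your proof is correct and is exactly the intended deduction: the paper states Corollary~\ref{PL} without proof because it follows immediately from Theorem~\ref{PLinitial} by taking $c=1$, which is precisely what you do.
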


\begin{re}
In Theorem \ref{PLinitial} and Corollary \ref{PL}, instead of $\mcal{S}\subset\N^r$ we can assume that $\mcal{S}\subset\Q^r$ and that
$\mcal{S}_\R$ is strongly convex.
\end{re}

\begin{ex}
The condition $f(\mcal{S})\subset\Z$ in Corollary \ref{PL} is crucial. Let $\mcal{S}=\N(-1,1)+\N(1,0)\subset\R^2$ and let $x_1=(1,0)$ and $x_n=(1/2^n,1)$
for $n\geq2$. Let $\mcal{C}_n=\R_+x_n+\R_+x_{n+1}$ and observe that $\R_+^2=\bigcup_{n\geq1}\mcal{C}_n\cup\big(\R_+(0,1)+\R_+(-1,1)\big)$.
Define the sequences of positive rational numbers $f(x_n)$ and $\varepsilon_n$ as follows: set $f(x_2)=3$ and $\varepsilon_2=23/8$. Assume $f(x_{n-1})$
and $\varepsilon_{n-1}$ are defined; then set $f(x_n)=(f(x_{n-1})+\varepsilon_{n-1})/2$ and choose $\varepsilon_n$ such that
$f(x_n)-1/2^n<\varepsilon_n<\varepsilon_{n-1}$; we can always arrange $\lim_{n\rightarrow\infty}\varepsilon_n\in\Q$.
Set $f(-1,1)=\lim_{n\rightarrow\infty}f(x_n)-1$, $f(0,1)=\lim_{n\rightarrow\infty}f(x_n)$, $f(x_1)=1/2$,
$f(\alpha x_n+\beta x_{n+1})=\alpha f(x_n)+\beta f(x_{n+1})$ and $f(-\alpha,\alpha+\beta)=\alpha f(-1,1)+\beta f(0,1)$ for $\alpha,\beta\geq0$.
Obviously $f(\mcal{S})\subset\Q$ and it is easy to check that $f$ is superlinear and continuous, but it is not PL on the subcone
$\bigcup_{n\geq2}\mcal{C}_n\cup\R_+(0,1)$.
\end{ex}

Throughout this section I will use without explicit mention basic properties of closed cones, see \cite[Section 6.3]{Deb01}.

\begin{lem}\label{local3}
Let $\mcal{S}=\N^{r+1}$ and let $f\colon\mcal{S}_\R\rightarrow\R$ be a
superlinear map. Assume that there is a real number $c>0$ such that for every $s_1,s_2\in\mcal{S}$, either $f(s_1+s_2)=f(s_1)+f(s_2)$
or $f(s_1+s_2)\geq f(s_1)+f(s_2)+c$. Let $x=(1,x_1,\dots,x_r)\in\Int\mcal{S}_\R$ and
let $R$ be a ray in $\mcal S_\R$ not containing $x$.

Then there exists a ray $R'\subset\R_+x+R$ not containing $x$ such that the map $f|_{\R_+x+R'}$ is linear.
\end{lem}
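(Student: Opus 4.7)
Fix a generator $y \in R$ and define
\[
g(t) := f(y + tx), \qquad \psi(t) := g(t) - f(y) - tf(x) \qquad (t \ge 0).
\]
Superlinearity of $f$ with Propositions \ref{prop} and \ref{Lip} makes $\psi$ continuous, concave, non-decreasing with $\psi(0) = 0$, and positive homogeneity of $f$ gives $\psi(t)/t \to 0$ as $t \to \infty$. Applying Lemma \ref{linear2} to the two-dimensional cone $\R_+ x + \R_+(y + t_0 x)$ and unwinding via positive homogeneity of $f$ shows that $f$ is linear on this cone if and only if $\psi(t_1) = \psi(t_0)$ for some $t_1 > t_0$; by concavity and monotonicity of $\psi$, this is equivalent to $\psi$ being constant on the whole half-line $[t_0, \infty)$. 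It therefore suffices to find some $t_0 \ge 0$ with $\psi$ constant on $[t_0, \infty)$ and then to take $R' := \R_+(y + t_0 x)$.

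Suppose for contradiction no such $t_0$ exists; then $\psi$ has strictly positive concavity defect at some triples, whether from smooth strict concavity or from kinks in a piecewise affine profile. I apply Dirichlet's simultaneous approximation theorem to $(x_1, \ldots, x_r)$ to produce integer vectors $p^{(n)} = (q_n, p_1^{(n)}, \ldots, p_r^{(n)}) \in \N^{r+1}$ with $q_n \to \infty$ and $\|q_n x - p^{(n)}\|_\infty \le q_n^{-1/r}$, and set $\tilde x_n := p^{(n)}/q_n$. For positive integers $a, b, a', b'$, the vectors $s_1 := ay + bp^{(n)}$ and $s_2 := a'y + b'p^{(n)}$ lie in $\mcal{S}$; positive homogeneity of $f$ factors $f(s_1)$, $f(s_2)$ and $f(s_1 + s_2)$ through $g_n(t) := f(y + t \tilde x_n)$, and with $\psi_n(t) := g_n(t) - f(y) - tf(\tilde x_n)$ the gap hypothesis translates into the quantised concavity dichotomy
\[
\psi_n(t_3) - \tfrac{a}{a+a'}\psi_n(t_1) - \tfrac{a'}{a+a'}\psi_n(t_2) \;\in\; \{0\} \cup \bigl[c/(a+a'), \infty\bigr),
\]
where $t_1 := bq_n/a$, $t_2 := b'q_n/a'$ and $t_3 := (b+b')q_n/(a+a')$.

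The Lipschitz estimate $|\psi_n(t) - \psi(t)| \le Kt/q_n^{1+1/r}$, from Proposition \ref{Lip} together with the scale-invariance of the Lipschitz constant along rays of $f$, transfers the dichotomy to $\psi$ at the same triples with vanishing error. Continued-fraction approximations of $T^*/q_n$ by rationals $b/a$ with $|b/a - T^*/q_n| \le 1/a^2$, where $T^*$ locates the defect of $\psi$, then produce triples at which the defect of $\psi$ is strictly positive yet, for $a$ sufficiently large, strictly less than $c/(a+a')$; this places the defect inside the forbidden open interval and contradicts the dichotomy. Hence $\psi$ has zero concavity defect at every triple, so it is affine on $[0, \infty)$; combined with $\psi(0) = 0$ and $\psi(t)/t \to 0$ this forces $\psi \equiv 0$, contradicting the assumption that $\psi$ is not eventually constant. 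The main obstacle is the joint tuning of the Diophantine parameters $(a, a', b, b', q_n)$: both the gap $c/(a+a')$ and the Lipschitz error shrink as the scales grow, so one must arrange these parameters so that the defect of $\psi$ dominates the Lipschitz error while falling strictly below the gap.
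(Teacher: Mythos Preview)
Your reduction to the one-variable function $\psi(t)=f(y+tx)-f(y)-tf(x)$ and the equivalence between linearity of $f$ on $\R_+x+\R_+(y+t_0x)$ and $\psi$ being constant on $[t_0,\infty)$ is correct and a clean way to phrase the goal. The argument breaks down, however, at the step where you invoke the gap hypothesis.

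The gap hypothesis is stated only for $s_1,s_2\in\mcal{S}=\N^{r+1}$. You set $s_1=ay+bp^{(n)}$ with $y$ a fixed generator of $R$, but $R$ is an \emph{arbitrary} ray in $\R_+^{r+1}$ and need not be rational; hence $ay$ is not an integer vector for any $a$, and $s_1\notin\mcal{S}$. Indeed the $2$-plane $\R x+\R y$ may contain no nonzero lattice point at all, so the gap hypothesis simply cannot be probed from inside this slice. The paper's proof avoids this by working in the full ambient space: it builds $r+1$ linearly independent integer vectors $u_0,\dots,u_r$ near $qx$ via Dirichlet and an integral sequence $v_{n+1}=v_n+u_{j_n}$ whose direction converges to $x$ (Lemma~\ref{converge}), and applies the gap hypothesis along this sequence. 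Returning from this full-dimensional picture to the given $2$-cone $\R_+x+R$ requires several further steps (Steps~5--10 of the paper) and, crucially, the inductive hypothesis that Theorem~\ref{PLinitial} already holds in dimension $r$.

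Even granting $y$ integral, the ``joint tuning of the Diophantine parameters'' that you flag in the last sentence is not actually carried out, and it is not clear that it can be. The dichotomy you derive constrains $\psi_n$, not $\psi$; the Lipschitz transfer between them carries an error of order $t/q_n^{1+1/r}$, which does not vanish for fixed $n$, whereas forcing $c/(a+a')$ to be small requires $a+a'\to\infty$ with $n$ fixed. To reach a contradiction one would need the concavity defect of $\psi_n$ itself, at the specific triple $(t_1,t_2,t_3)$, to lie strictly in $(0,c/(a+a'))$; nothing in the argument guarantees that $\psi_n$ has any defect near $T^*$, let alone one of the required size. Identifying this obstacle is not the same as resolving it, and as written the proof is a sketch of a strategy rather than a proof.
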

\begin{proof}
By induction, I assume Theorem \ref{PLinitial} when $\dim\mcal{S}_\R=r$.

The proof consists of three parts. In Steps 2-8 I assume the components of $x$ are linearly independent over $\Q$. In Step 9 I assume that $x$ is
a rational point while the remaining case when $x$ is a non-rational point which belongs to a rational hyperplane is settled in Step 10.\\[2mm]
\noindent{\em Step 1}:
Let $H$ be any $2$-plane not contained in a rational hyperplane.
Points of the form $(1,z_1,\dots,z_r)$, where $1,z_1,\dots,z_r$ are linearly independent over $\Q$, are dense on the line $L=H\cap(z_0=1)$.
Otherwise there would exist an open neighbourhood $U$ on $L$ such that for each point $z\in U$ there is a rational hyperplane $H_z\supset\R z$.
But the set of rational hyperplanes is countable.

On the other hand, fix a rational point $t\in\R^{r+1}\backslash H$ and observe rational hyperplanes containing $\R_+t$.
I claim that the set of points which are intersections of those hyperplanes and the line $L$ are dense on $L$.
To see this, let $y=(1,y_1,\dots,y_r)$ be a point in $H$ and let $\mcal{A}=(\alpha_0z_0+\dots+\alpha_rz_r=0)$ be any hyperplane containing $y$
and $t$. $H$ is given as a solution of a system of $r-1$ linear equations in $z_0,\dots,z_r$,
thus $y$ is a solution of a system of $r$ linear equations and the components of $y$ are linear functions in $\alpha_0,\dots,\alpha_r$,
where $\alpha_i$ are linearly dependent over $\Q$ (since $t\in\mcal{A}$). Therefore, without loss of generality,  wiggling $\alpha_i$ for $i<r$
we can obtain a point $y'\in L$ arbitrarily close to $y$ which belongs to a rational hyperplane
$\mcal{A}'=(\alpha_0'z_0+\dots+\alpha_r'z_r=0)$ containing $t$.
Furthermore, if $H$ contains a rational point $t_0$, then $y'$ cannot belong to a rational plane $\widetilde{\mcal{A}}$ of dimension $<n-1$
since otherwise $H$ would be contained in a rational hyperplane generated by $\widetilde{\mcal{A}}$ and $t_0$.
\\[2mm]
\noindent{\em Step 2}:
In Steps 2-8 I assume that the real numbers $1,x_1,\dots,x_r$ are linearly independent over $\Q$.

For a real number $\alpha$ let $\|\alpha\|:=\min\{\alpha-\lfloor\alpha\rfloor,\lceil\alpha\rceil-\alpha\}$. By \cite[Chapter I, Theorem VII]{Cas57}
there are infinitely many positive integers $q$ such that
\begin{equation}\label{cassels11}
\|qx_i\|<q^{-1/r}
\end{equation}
for all $i$. Fix such a $q$ big enough so that the ball of radius $1/q$ centred at $x$ is contained in $\Int\mcal{S}_\R$ and so that
$q^{1/r}>r$, and in particular $\sum\|qx_i\|<1$. Let $p_i$ be positive integers with $|qx_i-p_i|<q^{-1/r}$.

Let
$$\widehat p_i=\begin{cases}
\lfloor qx_i\rfloor & \textrm{if }p_i=\lceil qx_i\rceil\\
\lceil qx_i\rceil & \textrm{if }p_i=\lfloor qx_i\rfloor.
\end{cases}$$
Let $e_0,e_1,\dots,e_r$ be the standard basis of $\R^{r+1}$. Set
$$u_0=qe_0+\sum p_ie_i$$
and
$$u_i=qe_0+\sum_{j\neq i}p_je_j+\widehat p_ie_i$$
for $i=1,\dots,r$.
From \eqref{cassels11} we have
\begin{equation}\label{cassels211}
\|x-u_0/q\|_{\infty}<q^{-1-1/r}.
\end{equation}
It is easy to see that $u_0,\dots,u_r$ are linearly independent and that
\begin{equation}\label{relation}
\Big(1-\sum\|qx_i\|\Big)u_0+\sum\|qx_i\|u_i=qx.
\end{equation}

Assume that for every open cone $U$ containing $x$ the map $f_{|U}$ {\em is not linear\/}. Then in Steps 3-7 I will prove that for all $q\gg0$ satisfying
\eqref{cassels11} we have
\begin{equation}\label{keyequation}
f(x)=\Big(1-\sum\|qx_i\|\Big)f(u_0/q)+\sum\|qx_i\|f(u_i/q)+e_q,
\end{equation}
where $e_q\geq c(1-\sum\|qx_i\|)/q$. I will then derive a contradiction in Step 8.
\\[2mm]
\noindent{\em Step 3}:
Let $\mcal{K}=\sum_{i\geq0}\R_+u_i$ and $\mcal{K}_i=\R_+x+\sum_{j\neq i}\R_+u_j$ for $i=0,\dots,r$; observe that $\mcal{K}=\bigcup_{i\geq0}\mcal{K}_i$.
Define the sequences $v_n\in\N^{r+1}$ and $j_n\in\N$ as follows: set $v_0=\sum_{i\geq0}u_i$.
If $v_n$ is defined then, since the components of $x$ are linearly independent over $\Q$, there is a unique $j_n\in\{0,\dots,r\}$ such that
$v_n$ belongs to the {\em interior\/} of $\mcal{K}_{j_n}$. Set $v_{n+1}=v_n+u_{j_n}$.
Define the sequence of non-negative real numbers $e_n$ by
$$f(v_{n+1})=f(v_n)+f(u_{j_n})+e_n.$$
\noindent{\em Step 4}:
In this step I assume that for all $n\geq n_0$ with $j_n=0$ we have $e_n\geq c$. Then we have
\begin{equation}\label{step}
f(v_n)=\sum_{i=0}^r\alpha_i^{(n)}f(u_i)+e^{(n)},
\end{equation}
where $\alpha_i^{(n)}\in\N$ and $e^{(n)}\geq c(\alpha_0^{(n)}-n_0)$. Observe that $v_n=\sum_{i=0}^r\alpha_i^{(n)}u_i$,
and therefore from Lemma \ref{converge} we have
$$qx=\lim_{n\rightarrow\infty}v_n/n=\sum_{i=0}^r\lim_{n\rightarrow\infty}(\alpha_i^{(n)}/n)u_i.$$
Since $u_i$ are linearly independent, from \eqref{relation} we obtain
\begin{align*}
\lim_{n\rightarrow\infty}\alpha_0^{(n)}/n&=1-\sum\|qx_i\|
\intertext{and}
\lim_{n\rightarrow\infty}\alpha_i^{(n)}/n&=\|qx_i\|\quad\text{for}\quad i>0.
\end{align*}
Dividing \eqref{step} by $n$, taking a limit when $n\rightarrow\infty$ and using continuity of $f$ and Lemma \ref{converge} we obtain
$$f(qx)=\Big(1-\sum\|qx_i\|\Big)f(u_0)+\sum\|qx_i\|f(u_i)+\widehat e_q,$$
where $\widehat e_q\geq c(1-\sum\|qx_i\|)$. Dividing now by $q$ we get \eqref{keyequation}.
\\[2mm]
\noindent{\em Step 5}: In Steps 5-7 I assume there are infinitely many $n$ with $j_n=0$ and $e_n=0$.
Then by Lemma \ref{linear2} the map $f|_{\R_+v_n+\R_+u_0}$ is linear for each such $n$ (observe that when $r=1$
this finishes the proof since then $x\in\Int(\R_+v_n+\R_+u_0)$). But then we have
$$f(v_n/n+u_0)=f(v_n/n)+f(u_0),$$
so letting $n\rightarrow\infty$ and using Lemma \ref{converge} we get
$$f(qx+u_0)=f(qx)+f(u_0),$$
thus the map $f|_{\R_+x+\R_+u_0}$ is linear by Lemma \ref{linear2}.

Let us first prove that there is an $(r+1)$-dimensional polyhedral cone $\mcal{C}_{r+1}$ such that $\R_+x+\R_+u_0\subset\mcal{C}_{r+1}$,
$(\R_+x+\R_+u_0)\cap\Int\mcal{C}_{r+1}\neq\emptyset$ and
$f_{|\mcal{C}_{r+1}}$ is linear.
Let $t\in\Int\mcal{S}_\R\backslash(\R x+\R u_0)$ be a rational point.
By Step 1 there is a rational hyperplane $\mcal{H}\ni t$ such that
there is a nonzero $w\in\mcal{H}\cap\relint(\R_+x+\R_+u_0)$, and there does not exist a rational plane of dimension $<n-1$ containing $w$.
By Theorem \ref{PLinitial} applied to $\mcal{H}\cap\mcal{S}_\R$ there is an $r$-dimensional cone
$\mcal{C}_r=\sum_{i=1}^r\R_+h_i\subset\mcal{H}\cap\mcal S_\R$
such that $w\in\relint\mcal{C}_r$ and $f_{|\mcal{C}_r}$ is linear.
Set $\mcal{C}_{r+1}=\mcal{C}_r+\R_+x+\R_+u_0$.
Now if $w=\sum\mu_ih_i$ with all $\mu_i>0$, since $f$ is linear on $\mcal{C}_r$ we have
\begin{multline*}
f\Big(x+u_0+\sum\mu_ih_i\Big)=f(x+u_0+w)\\
=f(x)+f(u_0)+f(w)=f(x)+f(u_0)+\sum\mu_if(h_i),
\end{multline*}
so the map $f_{|\mcal{C}_{r+1}}$ is linear by Lemma \ref{linear2}.\\[2mm]
\noindent{\em Step 6}:
Let $\mcal{C}=\R_+g_1+\dots+\R_+g_m$ be any $(r+1)$-dimensional polyhedral cone containing $x$ such that $f_{|\mcal{C}}$ is linear
and let $\ell$ be the linear extension of $f_{|\mcal{C}}$ to $\R^{r+1}$.
Assume that for a point $h\in\mcal{S}_\R$ we have $f|_{\R_+h}=\ell|_{\R_+h}$. There are real numbers $\lambda_i$ such that
$$h=\sum\nolimits_i\lambda_ig_i.$$
Then setting $e:=\sum(1+|\lambda_i|)g_i+h=\sum(1+|\lambda_i|+\lambda_i)g_i\in\mcal{C}$ we have
\begin{align*}
f(e)&=\ell\Big(\sum(1+|\lambda_i|+\lambda_i)g_i\Big)=\sum(1+|\lambda_i|+\lambda_i)\ell(g_i)\\
&=\sum(1+|\lambda_i|)\ell(g_i)+\ell(h)=\sum(1+|\lambda_i|)f(g_i)+f(h),
\end{align*}
so $f$ is linear on the cone $\mcal{C}+\R_+h$ by Lemma \ref{linear2}. Therefore the set $\widehat{\mcal{C}}=\{z\in\mcal{S}_\R:f(z)=\ell(z)\}$
is an $(r+1)$-dimensional closed cone.
\\[2mm]
\noindent{\em Step 7}:
Since $f$ is not linear in any open neighbourhood of $x$ we have $x\notin\Int\widehat{\mcal{C}}$.
Therefore there is a tangent hyperplane $T$ to $\widehat{\mcal{C}}$ containing $x$. Let $W_1$ and $W_2$ be the half-spaces such that $W_1\cap W_2=T$,
$W_1\cup W_2=\R^{r+1}$ and $\widehat{\mcal{C}}\subset W_1$. Since $(\R_+x+\R_+u_0)\cap\Int\widehat{\mcal{C}}\neq\emptyset$ we must have
$(\R x+\R u_0)\cap W_2\neq\emptyset$.

By Step 1 applied to the $2$-plane $\R x+\R u_0$, for every non-negative $\varepsilon<q^{-1-1/r}-\max\{\|qx_i\|/q\}$ let
$$x_{\varepsilon}=(1,x_{\varepsilon,1},\dots,x_{\varepsilon,r})\in(\R x+\R u_0)\cap W_2$$
be such that $0<\|x-x_{\varepsilon}\|_{\infty}\leq\varepsilon$ and the components of $x_{\varepsilon}$ are linearly independent over $\Q$.
The map $f|_{\R_+u_0+\R_+x_{\varepsilon}}$ is not linear since otherwise we would have $f(x_{\varepsilon})=\ell(x_{\varepsilon})$.
Observe that $|qx_{\varepsilon,i}-p_i|<q^{-1/r}$ for every $i$.
Then as in Step 4 we have
$$f(qx_{\varepsilon})=\Big(1-\sum\|qx_{\varepsilon, i}\|\Big)f(u_0)+\sum\|qx_{\varepsilon,i}\|f(u_i)+\widehat e_q,$$
where $\widehat e_q\geq c(1-\sum\|qx_{\varepsilon,i}\|)$. Finally dividing by $q$ and letting $\varepsilon\rightarrow0$ we obtain \eqref{keyequation}.
\\[2mm]
\noindent{\em Step 8}:
Therefore for all $q\gg0$ satisfying \eqref{cassels11} we have \eqref{keyequation}.
Then since $f$ is locally Lipschitz around $x$ there is a constant $L>0$ such that
\begin{align*}
c(q^{1/r}&-r)q^{-1-1/r}<c\Big(1-\sum\|qx_i\|\Big)/q\leq e_q\\
&=\bigg(f(x)-\Big(1-\sum\|qx_i\|\Big)f(u_0/q)-\sum\|qx_i\|f(u_i/q)\bigg)\\
&=\Big(\big(f(x)-f(u_0/q)\big)+\sum\|qx_i\|\big(f(u_0/q)-f(u_i/q)\big)\Big)\\
&\leq L\|x-u_0/q\|_{\infty}+\sum\|qx_i\|L\|u_0/q-u_i/q\|_{\infty}\\
&<Lq^{-1-1/r}+\sum_{i=1}^rq^{-1/r}Lq^{-1}=L(r+1)q^{-1-1/r},
\end{align*}
where I used \eqref{cassels11} and \eqref{cassels211}. Hence $L>c(q^{1/r}-r)/(r+1)$ for $q\gg0$, a contradiction.

Thus if $1,x_1,\dots,x_r$ are linearly independent over $\Q$ then there is an open cone containing $x$ where $f$ is linear, so the lemma follows.
In particular there are linearly independent rational rays $R_1,\dots,R_{r+1}\subset\Int\mcal{S}_\R$ such that
$R\subset\Int(R_1+\dots+R_{r+1})$ and  the map $f|_{R_1+\dots+R_{r+1}}$ is linear.
\\[2mm]
\noindent{\em Step 9}:
Assume now that $x$ is a rational point. By induction I assume there does not exist a rational hyperplane containing $\R_+x$ and $R$.
By clearing denominators I can assume $x=(\kappa,x_1,\dots,x_r)$ where $\kappa,x_i\in\N$.

Fix $q$ big enough so that the ball of radius $1/q$ centred at $x$ is contained in $\Int\mcal{S}_\R$ and so that $q^{1/r}>r$.
Fix a positive $\varepsilon<q^{-1-1/r}$. By Step 1 there is a point
$$x_{\varepsilon}=(\kappa,x_{\varepsilon,1},\dots,x_{\varepsilon,r})\in\R_+x+R$$
such that $\|x-x_{\varepsilon}\|_{\infty}\leq\varepsilon$
and the components of $x_{\varepsilon}$ are linearly independent over $\Q$. Set $u_0=qx$, define integers $p_i$ and $\widehat{p}_i$ with respect to
$x_{\varepsilon}$ as in Step 2
and set
$$u_i=q\kappa e_0+\sum_{j\neq i}p_je_j+\widehat p_ie_i$$
for $i=1,\dots,r$.
Then $u_0,\dots,u_r$ are linearly independent and we have
$$\Big(1-\sum\|qx_{\varepsilon,i}\|\Big)u_0+\sum\|qx_{\varepsilon,i}\|u_i=qx_{\varepsilon}.$$

With respect to $x_{\varepsilon}$ define the sequences $v_n\in\N^{r+1}$ and $(j_n,e_n)\in\N\times\R_+$ as in Step 3.
Assume that for all $n\geq n_0$ with $j_n=0$ we have
$e_n\geq c$. Then as in Step 4 we obtain
\begin{equation}\label{wrinkle}
f(qx_{\varepsilon})=\Big(1-\sum\|qx_{\varepsilon,i}\|\Big)f(u_0)+\sum\|qx_{\varepsilon,i}\|f(u_i)+\widehat e_q,
\end{equation}
where $\widehat e_q\geq c(1-\sum\|qx_{\varepsilon,i}\|)$. If \eqref{wrinkle} stands for every $\varepsilon<q^{-1-1/r}$ then dividing \eqref{wrinkle}
by $q$ and letting $\varepsilon\rightarrow0$ we get
$$f(x)=f(x)+e_q$$
where $e_q\geq c/q$, a contradiction.
Therefore there is a positive $\varepsilon<q^{-1-1/r}$ such that there are infinitely many $n$ with $j_n=0$ and $e_n=0$.
But then as in Step 5 we have that the map $f|_{\R_+x+\R_+x_{\varepsilon}}$ is linear and we are done.\\[2mm]
\noindent{\em Step 10}:
Assume finally that $x$ is a non-rational point contained in a rational hyperplane; let $H$ be a rational plane of the smallest dimension
containing $x$ and set $k=\dim H$. Let $R=\R_+v$.

By Theorem \ref{PLinitial} there is a rational cone $\mcal{C}=\sum_{i=1}^k\R_+g_i\subset H$ with $g_i$ being rational points
such that $f_{|\mcal{C}}$ is linear and $x\in\relint\mcal{C}$, or equivalently $x=\sum\lambda_ig_i$
with all $\lambda_i>0$. Take a rational point $y=\sum_{i=1}^kg_i$.
Then by Step 9 there is a point $x'=\alpha y+\beta v$ with $\alpha,\beta>0$ such that
the map $f|_{\R_+y+\R_+x'}$ is linear. Now we have
$$f\Big(\sum g_i+x'\Big)=f(y+x')=f(y)+f(x')=\sum f(g_i)+f(x'),$$
so the map $f|_{\mcal{C}+\R_+x'}$ is linear by Lemma \ref{linear2}. Taking $\mu=\max\limits_i\{\alpha/(\lambda_i\beta)\}$
and setting $\hat v=\mu x+v\in\relint(\R_+x+R)$, it is easy to check that
$$\hat v=\sum(\mu\lambda_i-\alpha/\beta)g_i+x'/\beta\in\mcal{C}+\R_+x',$$
so the map $f|_{\R_+x+\R_+\hat v}$ is linear.
\end{proof}

\begin{lem}\label{converge}
Assume the notation from Lemma \ref{local3}. Then
$$\lim_{n\rightarrow\infty}v_n/n=qx.$$
\end{lem}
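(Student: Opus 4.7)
The plan is to pass to barycentric coordinates on $\R_+u_0+\dots+\R_+u_r$ and recognise the recursion defining $v_n$ as a longest-queue-first scheduling rule, for which asymptotic equidistribution follows from a short convexity argument.

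First I would write $v_n=\sum_{i=0}^{r}\alpha_i^{(n)}u_i$ with $\alpha_i^{(0)}=1$, and, using \eqref{relation}, set $c_0=1-\sum\|qx_i\|$ and $c_i=\|qx_i\|$ for $i>0$, so that $qx=\sum_{i}c_iu_i$ and $\sum_{i}c_i=1$. Since $u_0,\dots,u_r$ are linearly independent, proving $v_n/n\to qx$ reduces to proving $\alpha_i^{(n)}/n\to c_i$ for every $i$. The condition $v_n\in\Int\mcal{K}_{j_n}$, i.e.\ $v_n=\lambda x+\sum_{j\neq j_n}\mu_j u_j$ with $\lambda,\mu_j>0$, becomes after substituting $\lambda x=(\lambda/q)\sum_i c_iu_i$ and matching coefficients against $\sum_i\alpha_i^{(n)}u_i$:
$$\rho_{j_n}^{(n)}<\rho_j^{(n)}\quad\text{for all }j\neq j_n,\qquad\text{where }\rho_i^{(n)}:=\alpha_i^{(n)}/c_i.$$
Thus $j_n$ is the unique minimiser of $i\mapsto\rho_i^{(n)}$, and the recursion increments exactly that one coordinate by $1$.

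The heart of the argument is to show that $\Psi^{(n)}:=\max_i\rho_i^{(n)}-\min_i\rho_i^{(n)}$ is bounded uniformly in $n$. Only $\rho_{j_n}^{(n)}$ changes in a step, jumping from the current minimum value to $\min_i\rho_i^{(n)}+1/c_{j_n}$. I split into two cases: if the new value is still at most $\max_i\rho_i^{(n)}$, then the maximum is unchanged and the minimum cannot decrease, so $\Psi^{(n+1)}\leq\Psi^{(n)}$; otherwise $\max_i\rho_i^{(n+1)}=\min_i\rho_i^{(n)}+1/c_{j_n}$ and $\min_i\rho_i^{(n+1)}\geq\min_i\rho_i^{(n)}$, yielding $\Psi^{(n+1)}\leq 1/c_{j_n}\leq\max_i 1/c_i$. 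By induction $\Psi^{(n)}\leq\max\bigl(\Psi^{(0)},\max_i 1/c_i\bigr)$ for every $n$.

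Finally, since exactly one coordinate increments per step, $\sum_i\alpha_i^{(n)}=n+(r+1)$, equivalently $\sum_i c_i\rho_i^{(n)}=n+(r+1)$. Combined with the uniform bound on $\Psi^{(n)}$ and $\sum c_i=1$, this traps every $\rho_i^{(n)}$ in an interval of bounded length centred at $n+(r+1)$, so $\rho_i^{(n)}/n\to 1$ and hence $\alpha_i^{(n)}/n\to c_i$. This gives $v_n/n\to qx$. The only real obstacle I anticipate is the uniform bound on $\Psi^{(n)}$; once that monotonicity-type estimate is in place, the rest is straightforward barycentric bookkeeping.
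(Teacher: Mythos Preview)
Your proof is correct and takes a genuinely different route from the paper's. The paper works geometrically: it rescales to $w_n=v_n/(n+r+1)$, restricts to the affine hyperplane $(z_0=q)$ so that the $w_n$ live in the simplex $\sigma$ with vertices $u_0,\dots,u_r$, and then studies the distances $\dist\{w_n,H_\alpha\}$ to the hyperplanes $H_\alpha$ spanned by the internal faces of the subdivision $\sigma=\bigcup_i\sigma_i$ (those faces through $qx$). It shows that for each $\alpha$ this distance is eventually monotone decreasing unless the segment $[w_n,w_{n+1}]$ crosses $H_\alpha$, in which case it is $O(1/n)$; a short harmonic-series argument then rules out the possibility that some $H_\alpha$ is crossed only finitely often, forcing $w_n\to\bigcap_\alpha H_\alpha=\{qx\}$.

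Your argument replaces all of this geometry with a clean combinatorial invariant. Passing to barycentric coordinates $\alpha_i^{(n)}$ and normalising by the weights $c_i$, you correctly identify the update rule as ``increment the coordinate with the smallest ratio $\rho_i^{(n)}=\alpha_i^{(n)}/c_i$'', and the one-line case split showing $\Psi^{(n)}=\max_i\rho_i^{(n)}-\min_i\rho_i^{(n)}$ stays bounded by $\max(\Psi^{(0)},\max_i 1/c_i)$ is both correct and the natural Lyapunov-type estimate here. The conclusion via $\sum_ic_i\rho_i^{(n)}=n+r+1$ is immediate. Your approach is shorter, avoids the hyperplane bookkeeping and the harmonic-series step, and makes transparent why the result is really a statement about a greedy load-balancing recursion rather than about the specific geometry of $\mcal K$. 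The paper's proof, on the other hand, is more self-contained in that it never needs to name the barycentric coordinates and gives a picture of what is happening inside the simplex.
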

\begin{proof}
I work with the standard scalar product $\langle\cdot,\cdot\rangle$ and the induced Euclidean norm $\|\cdot\|$; denote $w_n=v_n/(n+r+1)$.
It is enough to prove $\lim_{n\rightarrow\infty}w_n=qx$.
By restricting to the hyperplane $(z_0=q)$ in $\R^{r+1}$ I assume the ambient space is $\R^r$.
\begin{figure}[htb]
\begin{center}
\includegraphics[width=0.62\textwidth]{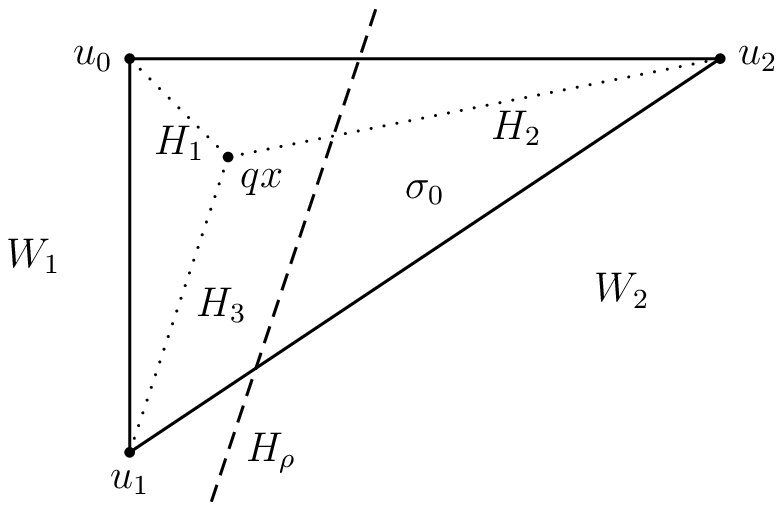}
\end{center}
\end{figure}
\\[2mm]
\noindent{\em Step 1}:
Let $\sigma$ denote the simplex with vertices $u_0,\dots,u_r$ and let $d=\sqrt2$ be the diameter of $\sigma$. For each $i$, let $\sigma_i$ be the
simplex with vertices $qx$ and $u_j$ for $j\neq i$.
The points $w_n$ belong to $\sigma$ and
$$w_{n+1}=\frac{1}{n+r+2}\big((n+r+1)w_n+u_{j_n}\big),$$
so we immediately get
\begin{equation}\label{bounds}
\|w_n-w_{n+1}\|\leq d/(n+r+2).
\end{equation}
For $\alpha=1,\dots,{r+1\choose2}$ let $H_{\alpha}$ be all hyperplanes containing the faces of the simplices $\sigma_i$ which contain $qx$.
\\[2mm]
\noindent{\em Step 2}:
Let us prove that for each $\alpha$ and for each $n$,
\begin{equation}\label{notintersect}
\dist\{w_{n+1},H_{\alpha}\}<\dist\{w_n,H_{\alpha}\}
\end{equation}
if the segment $[w_n,w_{n+1}]$ does not intersect $H_{\alpha}$, and otherwise
\begin{equation}\label{intersect}
\dist\{w_{n+1},H_{\alpha}\}<d/(n+r+2).
\end{equation}
To this end, if $H_{\alpha}$ contains $u_{j_n}$, then obviously $\dist\{w_{n+1},H_{\alpha}\}<\dist\{w_n,H_{\alpha}\}$. If $H_{\alpha}$ does not
contain $u_{j_n}$, then $u_{j_n}$ and $w_n$ are on different sides of $H_{\alpha}$. Now if the segment $[w_n,w_{n+1}]$ does not intersect $H_{\alpha}$
then \eqref{notintersect} is obvious, whereas otherwise \eqref{intersect} follows from \eqref{bounds}.
\\[2mm]
\noindent{\em Step 3}:
Now assume that for each $\alpha$, there are infinitely many segments $[w_n,w_{n+1}]$ intersecting $H_{\alpha}$. Then from \eqref{notintersect} and
\eqref{intersect} we get
$$\lim_{n\rightarrow\infty}\dist\{w_n,H_\alpha\}=0$$
and thus the sequence $w_n$ accumulates on each of the hyperplanes
$H_{\alpha}$. But $\bigcap_{\alpha}H_{\alpha}=\{qx\}$, so $\lim\limits_{n\rightarrow\infty}w_n=qx$.\\[2mm]
\noindent{\em Step 4}:
Finally let $\alpha_0$ be such that no segment $[w_n,w_{n+1}]$ intersects $H_{\alpha_0}$ for all $n\geq n_0$ and
$\lim\limits_{n\rightarrow\infty}\dist\{w_n,H_{\alpha_0}\}=\rho>0$ (the sequence $\dist\{w_n,H_{\alpha_0}\}$ converges by \eqref{notintersect}).
Therefore there is a hyperplane $H_\rho$ parallel to $H_{\alpha_0}$ such that $\dist\{H_\rho,H_{\alpha_0}\}=\rho$ and the sequence $w_n$
accumulates on $H_\rho$; let $W_1$ and $W_2$ be the two half-spaces such that $W_1\cup W_2=\R^r$ and $W_1\cap W_2=H_\rho$.
Relabelling we can assume $u_0,\dots,u_{r-1},qx\in W_1$ and $w_n,u_r\in W_2$ for all $n\geq n_0$; observe that then
$u_{j_n}\in\{u_0,\dots,u_{r-1}\}$ for all $n\geq n_0$.

By change of coordinates I may assume that $H_{\alpha_0}$ contains the origin. Fix a nonzero vector $a$ perpendicular to $H_{\alpha_0}$
such that $W_2\subset\{z\in\R^r:\langle a,z\rangle\geq0\}$. Since $W_2\cap H_{\alpha_0}=\emptyset$ the linear function $\langle a,\cdot\rangle$
attains its minimum $m>0$ on the compact set $W_2\cap\sigma$. Then since $\langle a,u_{j_n}\rangle\leq0$ for $n\geq n_0$ we have
\begin{align*}
\dist\{w_n,H_{\alpha_0}\}&-\dist\{w_{n+1},H_{\alpha_0}\}=\frac{\langle a,w_n-w_{n+1}\rangle}{\|a\|}\\
&=\frac{\langle a,w_n-u_{j_n}\rangle}{(n+r+2)\|a\|}\geq\frac{m}{(n+r+2)\|a\|},
\end{align*}
and therefore
$$\dist\{w_{n_0},H_{\alpha_0}\}\geq\frac{m}{\|a\|}\sum_{n\geq n_0}\frac{1}{n+r+2}=+\infty,$$
a contradiction.
\end{proof}

\begin{cor}\label{everyplane}
Let $\mcal{S}\subset\N^{r+1}$ be a finitely generated monoid and let $f\colon\mcal{S}_\R\rightarrow\R$ be a superlinear map.
Assume there is a real number $c>0$ such that for every $s_1,s_2\in\mcal{S}$, either $f(s_1+s_2)=f(s_1)+f(s_2)$
or $f(s_1+s_2)\geq f(s_1)+f(s_2)+c$. Let $\mcal{C}$ be a polyhedral cone in $\Int\mcal S_\R$.

Then for every $2$-plane $H$ the map $f_{|\mcal{C}\cap H}$ is piecewise linear.
\end{cor}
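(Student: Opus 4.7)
The plan is to combine Lemma \ref{local3} with a compactness argument on the rays of $\mcal{C}\cap H$. The cone $\mcal{C}\cap H$ is polyhedral of dimension at most $2$; if its dimension is at most $1$, then $f|_{\mcal{C}\cap H}$ is linear by positive homogeneity, so there is nothing to prove. Assume therefore that $\dim(\mcal{C}\cap H)=2$, so that $\mcal{C}\cap H$ has two extreme rays $R_1,R_2$, and parametrise its rays by a compact interval $I=[0,1]$ via some continuous section $\gamma\colon I\rightarrow\mcal{C}\cap H\setminus\{0\}$ meeting each ray exactly once.

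Fix $t\in I$ and set $x_t=\gamma(t)$; since $\mcal{C}\subset\Int\mcal{S}_\R$, the point $x_t$ lies in $\Int\mcal{S}_\R$, and consequently a two-dimensional neighbourhood of $x_t$ in $H$ is contained in $\mcal{S}_\R$. Choose rays $R^+,R^-\subset\mcal{S}_\R\cap H$ on opposite sides of $\R_+x_t$ within $H$ (for $t\in(0,1)$ we may even choose them inside $\mcal{C}\cap H$; for the boundary values $t=0,1$ we take only one ray, on the interior side). Applying Lemma \ref{local3} to $x_t$ with $R=R^+$ and then with $R=R^-$ produces rays $R'^{\pm}\subset\R_+x_t+R^{\pm}\subset H$ such that $f$ is linear on each of the two-dimensional cones $\R_+x_t+R'^{+}$ and $\R_+x_t+R'^{-}$. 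Their union is a relatively open subcone of $\mcal{C}\cap H$ containing $\R_+x_t$, on which $f$ is PL with at most a single break along $\R_+x_t$.

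This procedure yields an open cover of $I$ by intervals on each of which $f\circ\gamma$ is piecewise linear, and by compactness of $I$ we extract a finite subcover. Consecutive members of the subcover overlap in a two-dimensional subcone of $\mcal{C}\cap H$; on such an overlap a linear expression for $f$ is uniquely determined, so the various PL representations glue together to give a finite PL decomposition of $\mcal{C}\cap H$. The one technical concern is that Lemma \ref{local3} is stated for $\mcal{S}=\N^{r+1}$, while here $\mcal{S}$ is merely a finitely generated submonoid of $\N^{r+1}$; but since $x_t\in\Int\mcal{S}_\R$, the integer points $u_0,\dots,u_r$ produced by the Diophantine approximation in that lemma can be chosen inside $\mcal{S}$ after replacing $x_t$ by a suitable positive integer multiple, and the argument goes through unchanged.
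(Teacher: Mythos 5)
Your overall strategy---apply Lemma~\ref{local3} at each ray of $\mcal{C}\cap H$ to produce a relatively open neighbourhood on which $f$ is PL with at most one break, then use compactness to extract a finite cover and glue---is exactly the paper's argument; your interval parametrisation $I$ of the rays is just a reparametrisation of the arc $S\cap\mcal{C}\cap H$ used in the paper.

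The last paragraph, however, contains a genuine gap. Lemma~\ref{local3} is stated and proved for $\mcal{S}=\N^{r+1}$; its proof constructs $u_0,\dots,u_r$ as nearest-integer points in $\Z^{r+1}$ to $qx$, builds the sequence $v_n$ from sums of the $u_i$, and then invokes the hypothesis that any two points of $\mcal{S}$ are either additively compatible or violate additivity by at least $c$. For a general finitely generated $\mcal{S}\subset\N^{r+1}$ this hypothesis is only available on $\mcal{S}$ itself, so all these auxiliary points must lie in $\mcal{S}$. Your proposed remedy---replace $x_t$ by a large positive integer multiple---does not achieve this: the scaling is already built into the lemma through the parameter $q\to\infty$, and more to the point, if $\mcal{S}$ generates a proper sublattice $L\subsetneq\Z^{r+1}$, then the $\Z^{r+1}$-approximants to $qx_t$ need not lie in $L$, let alone in $\mcal{S}$, no matter how deep inside $\Int\mcal{S}_\R$ the point $qx_t$ is. What is needed is a change of the underlying lattice/monoid, not a rescaling of the base point. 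The paper does precisely this: after cutting $\mcal{C}$ into rational simplicial pieces, it replaces the generators $g_1,\dots,g_{r+1}$ of such a piece by slightly ``outward-pushed'' points $g_i'\in\mcal{S}$ so that $\mcal{C}\subset\Int\big(\sum\R_+g_i'\big)$, and then replaces $\mcal{S}$ by the free submonoid $\sum\N g_i'\cong\N^{r+1}$; the $c$-hypothesis restricts to this submonoid, $\mcal{C}$ remains in the interior of its cone, and Lemma~\ref{local3} applies verbatim in the new coordinates. You should replace the final paragraph by this (or an equivalent) reduction; the rest of your argument then goes through.
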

\begin{proof}
If $\mcal{C}=\bigcup\mcal{C}_i$ is a finite subdivision of $\mcal{C}$ into rational simplicial cones, then $f_{|\mcal{C}\cap H}$ is PL
if and only if $f_{|\mcal{C}_i\cap H}$ is PL for every $i$, so I assume $\mcal{C}$ is simplicial.
Take a basis $g_1,\dots,g_{r+1}\in\mcal{S}$ of $\mcal{C}$, set $s:=\sum g_i$ and let $0<\alpha\ll1$ be a rational number such that
$g_i+\alpha(g_i-s)\in\Int\mcal{S}_\R$ for all $i$. Take $g_i'\in\mcal{S}\cap\R_+\big(g_i+\alpha(g_i-s)\big)$. It is easy to check that
$g_i'$ are linearly independent and that $\mcal{C}\subset\Int(\sum\R_+g_i')$. Therefore I can assume $\mcal{S}=\N^{r+1}$.

By Lemma \ref{local3}, for every ray $R\subset\mcal{C}\cap H$ there is a polyhedral cone
$\mcal{C}_R$ with $R\subset\mcal{C}_R\subset\mcal{C}\cap H$ such that there is a polyhedral decomposition
$\mcal{C}_R=\mcal{C}_{R,1}\cup\mcal{C}_{R,2}$ with $f_{|\mcal{C}_{R,1}}$ and $f_{|\mcal{C}_{R,2}}$
being linear maps, and if $R\subset\relint(\mcal{C}\cap H)$, then $R\subset\relint\mcal{C}_R$.

Let $\|\cdot\|$ be the standard Euclidean norm and let $S=\{z\in\R^{r+1}:\|z\|=1\}$ be the unit sphere. Restricting to the compact set
$S\cap\mcal{C}\cap H$ we can choose finitely many polyhedral cones $\mcal{C}_i$ with
$\mcal{C}\cap H=\bigcup\mcal{C}_i$ such that each $f_{|\mcal{C}_i}$ is PL. But then $f_{|\mcal{C}\cap H}$ is PL.
\end{proof}

\begin{lem}\label{piecewise}
Let $f$ be a superlinear function on a polyhedral cone $\mcal{C}\subset\R^{r+1}$ with $\dim\mcal{C}=r+1$ such that for every
$2$-plane $H$ the function $f_{|H\cap\mcal{C}}$ is piecewise linear. Then $f$ is piecewise linear.
\end{lem}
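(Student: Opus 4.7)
I would argue by induction on $n := r+1 = \dim\mcal{C}$. The base case $n=2$ is immediate: $\mcal{C}$ itself spans a $2$-plane $H$, and the hypothesis applied to $H$ gives $f$ PL on $\mcal{C}$.

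For the inductive step, assume the statement in all ambient dimensions smaller than $n$. The first move is to upgrade the $2$-plane hypothesis to a hyperplane statement: for any rational hyperplane $\mcal{H}$ through the origin with $\dim(\mcal{H}\cap\mcal{C})=n-1$, the $2$-plane sections of $\mcal{H}\cap\mcal{C}$ are $2$-plane sections of $\mcal{C}$, hence PL; by the inductive hypothesis, $f|_{\mcal{H}\cap\mcal{C}}$ is PL. In particular $f$ is PL on every proper face of $\mcal{C}$.

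The crux of the argument is to show that every ray $R\subset\Int\mcal{C}$ admits an $n$-dimensional polyhedral subcone $\mcal{K}_R\subset\mcal{C}$ with $R\subset\Int\mcal{K}_R$ on which $f$ is linear. Granting this, the slice $\mcal{C}\cap\{\varphi=1\}$ for any positive linear functional $\varphi$ is compact, so finitely many such cones $\mcal{K}_R$ suffice to cover the interior of $\mcal{C}$; combining them with the PL structure on the boundary faces obtained above yields a finite polyhedral decomposition of $\mcal{C}$ on which $f$ is PL.

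To build $\mcal{K}_R$ at a point $x\in R\cap\Int\mcal{C}$, I choose a rational hyperplane $\mcal{H}\ni x$ generic enough that $x$ lies in the relative interior of some maximal linearity cell $\mcal{L}\subset\mcal{H}\cap\mcal{C}$ of the PL function $f|_{\mcal{H}\cap\mcal{C}}$. Writing $\mcal{L}=\sum_{i=1}^{n-1}\R_+g_i$, let $\ell_\mcal{H}\colon\mcal{H}\to\R$ be the linear extension of $f|_\mcal{L}$. For any $v\in\mcal{C}\setminus\mcal{H}$ chosen so that $\R_+x$ lies in the relative interior of a linearity cell of $f$ restricted to the $2$-plane $\R x+\R v$ (a generic condition on $v$, since each $2$-plane section has only finitely many creases), the directional derivative $f'(x;v)$ is well defined and supplies the missing component needed to extend $\ell_\mcal{H}$ to a linear function $\ell$ on $\R^n$. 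Concavity of $f$ together with Lemma~\ref{linear2}, used in the spirit of Step~6 of the proof of Lemma~\ref{local3}, then yields $f=\ell$ on the polyhedral cone $\mcal{K}_R:=\sum_{i=1}^{n-1}\R_+g_i+\R_+v$, which is $n$-dimensional and contains $x$ in its interior.

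The chief obstacle is the consistency of the extension in the last step: one must verify that the linear function $\ell$ assembled from $\ell_\mcal{H}$ and the directional slope $f'(x;v)$ actually agrees with $f$ on a full-dimensional polyhedral neighbourhood of $R$, and not merely along the hyperplane $\mcal{H}$ together with a single extra direction. This requires a careful interplay between superlinearity of $f$ and the PL structure imposed by our hypothesis on every $2$-plane through $x$, and is the analog here of the passages from lower-dimensional to higher-dimensional linearity carried out in Lemma~\ref{local3}.
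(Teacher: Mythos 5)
The central claim in your argument --- that every ray $R\subset\Int\mcal{C}$ admits an $n$-dimensional polyhedral cone $\mcal{K}_R\subset\mcal{C}$ with $R\subset\Int\mcal{K}_R$ on which $f$ is linear --- is false, and this defect is not repairable by a ``careful interplay.'' Consider $\mcal{C}=\R_+^3$ and $f(x,y,z)=\min(x,y)$. This $f$ is superlinear, and its restriction to any $2$-plane section is the minimum of two linear functions, hence PL, so the hypotheses of the lemma hold. Yet for $R=\R_+(1,1,1)\subset\Int\mcal{C}$, the crease along the hyperplane $\{x=y\}$ passes through $R$, so no full-dimensional cone containing $R$ in its interior can be a linearity cell of $f$. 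This is not a pathology: the rays sitting on creases are exactly the ones that carry all the combinatorial information of the PL decomposition, and your claim would force $f$ to be linear near each of them. The genericity argument you use to sidestep this also fails for the same reason: when $x$ sits on an $(n-1)$-dimensional crease of $f$, the restriction $f|_{\R x+\R v}$ has a crease at $\R_+x$ for \emph{every} $v$ outside that crease hyperplane, so ``a generic $v$'' does not rescue you. Separately, the cone you actually construct, $\mcal{K}_R=\sum\R_+g_i+\R_+v$, has $\mcal{L}\subset\mcal{H}$ as a facet and therefore contains $R\subset\mcal{L}$ only in its boundary, not its interior; so even granting the construction, the compactness argument at the end would not close, since the $\mcal{K}_R$ are not open neighbourhoods of the rays.

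The paper's proof avoids this trap by proving something weaker but sufficient: around each ray $R$ there is a \emph{finite collection} of full-dimensional linearity cones $\{\mcal{C}_\alpha\}_{\alpha\in I_R}$, each containing $R$ (possibly in its boundary), which together cover a neighbourhood of $R$ in $\mcal{C}$. Step~1 builds, for each direction $R'$, one linearity cone meeting $R+R'$ nontrivially (your hyperplane-then-$2$-plane mechanism is essentially this step, and that part is sound). Step~2 shows the saturated set $\widehat{\mcal{C}}=\{z:f(z)=\ell(z)\}$ is locally polyhedral, hence polyhedral, using a limiting argument with hyperplanes $H_r^{(n)}$. Step~3 --- the step you have no analogue of --- proves that the index set $I_R$ is \emph{finite}, by showing that infinitely many distinct linear pieces accumulating at $R$ would force two of them to coincide. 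Only then does compactness of $S\cap\mcal{C}$ finish the proof. You need a replacement for Steps~2 and~3, and the single-cone claim cannot serve that role.
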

\begin{proof}
I will prove the lemma by induction on the dimension.\\[2mm]
\noindent{\em Step 1}:
Fix a ray $R\subset\mcal{C}$. In this step I prove that for any ray $R'\subset\mcal{C}$ there is an $(r+1)$-dimensional cone
$\mcal{C}_{(r+1)}\subset\mcal{C}$ containing $R$ such that the map $f_{|\mcal{C}_{(r+1)}}$ is linear and $\mcal{C}_{(r+1)}\cap(R+R')\neq R$.

Let $H_r\supset(R+R')$ be any hyperplane. By induction there is an $r$-dimensional polyhedral cone
$\mcal{C}_{(r)}=\sum_{i=1}^r\R_+e_i\subset H_r\cap\mcal{C}$ containing $R$ such that $f_{|\mcal{C}_{(r)}}$
is linear and $\mcal{C}_{(r)}\cap(R+R')\neq R$. Set $e_0=e_1+\dots+e_r$.
Let $H_2$ be a $2$-plane such that $H_2\cap H_r=\R_+e_0$. Since $f_{|H_2\cap\mcal{C}}$ is
PL, there is a point $e_{r+1}\in H_2\cap\mcal{C}$ such that $f|_{\R_+e_0+\R_+e_{r+1}}$ is linear.
Set $\mcal{C}_{(r+1)}=\R_+e_1+\dots+\R_+e_{r+1}$. Then we have
$$f\Big(\sum e_i\Big)=f(e_0+e_{r+1})=f(e_0)+f(e_{r+1})=\sum f(e_i),$$
so the map $f_{|\mcal{C}_{(r+1)}}$ is linear by Lemma \ref{linear2}.
Observe that choosing $e_{r+1}$ appropriately we can ensure that the cone $\mcal{C}_{(r+1)}$ is contained in either
of the half-spaces into which $H_r$ divides $\R^{r+1}$.\\[2mm]
\noindent{\em Step 2}:
Fix a ray $R\subset\mcal{C}$ and let $\mcal{C}_{(r+1)}$ be any $(r+1)$-dimensional cone such that $f_{|\mcal{C}_{(r+1)}}$ is linear.
Let $\ell$ be the linear extension of $f_{|\mcal{C}_{r+1}}$ to $\R^{r+1}$. Let $\widehat{\mcal{C}}=\{z\in\mcal{C}:f(z)=\ell(z)\}$;
it is a closed cone by Step 6 of the proof of Lemma \ref{local3}.

I claim $\widehat{\mcal{C}}$ is a locally polyhedral cone (and thus polyhedral). Otherwise, fix a boundary ray $R_{\infty}$ and let $H$ be
any hyperplane containing $R_{\infty}$ such that
$H\cap\Int\widehat{\mcal{C}}\neq\emptyset$. Let $R_n$ be a sequence of boundary rays which converge to $R_{\infty}$ and they are all on the
same side of $H$.

Let $T\supset R_{\infty}$ be any hyperplane tangent to $\widehat{\mcal{C}}$. Fix an $(r-1)$-plane $H_{r-1}\subset T$ containing $R_{\infty}$
and let $H_{r-1}^\perp$ be the unique $2$-plane orthogonal to $H_{r-1}$.
For each $n$ consider a hyperplane $H_r^{(n)}$ generated by $H_{r-1}$ and $R_n$ (if $R_n\subset H_{r-1}$ then we can finish by induction on the
dimension). Let $\|\cdot\|$ be the standard Euclidean norm and let $S=\{z\in\R^{r+1}:\|z\|=1\}$ be the unit sphere. The set of points
$\bigcup_{n\in\N}\big(S\cap H_{r-1}^\perp\cap H_r^{(n)}\big)$
has a limit $P_{\infty}$ on the circle $S\cap H_{r-1}^\perp$ and let $H_r^{(\infty)}$ be the hyperplane generated by $H_{r-1}$
and $P_{\infty}$; without loss of generality I can assume all $R_n$ are on the same side of $H_r^{(\infty)}$.

Now by the construction in Step 1, there is an $(r+1)$-dimensional cone $\mcal{C}_\infty$ such that $\mcal{C}_\infty\cap H_r^{(\infty)}$ is a face of
$\mcal{C}_\infty$, $f_{|\mcal{C}_\infty}$ is linear and $\mcal{C}_\infty$ intersects hyperplanes $H_r^{(n)}$ for all $n\gg0$.
In particular $R_n\subset\mcal{C}_\infty$ for all $n\gg0$ and $\Int\mcal{C}_\infty\cap\widehat{\mcal{C}}\neq\emptyset$.
Let $w\in\Int\mcal{C}_\infty\cap\widehat{\mcal{C}}$ and let $B\subset\Int\mcal{C}_\infty$ be a small
ball centred at $w$. Then the cone $B\cap\widehat{\mcal{C}}$ is $(r+1)$-dimensional (otherwise the cone $\widehat{\mcal{C}}$ would be
contained in a hyperplane) and thus $\mcal{C}_\infty\cap\widehat{\mcal{C}}$ is an $(r+1)$-dimensional cone.
Therefore the linear extension of $f_{|\mcal{C}_\infty}$ coincides with $\ell$ and thus $\mcal{C}_\infty\subset\widehat{\mcal{C}}$.
Since $R_n\not\subset\Int\widehat{\mcal{C}}$ we must have $R_n\subset\mcal{C}_\infty\cap H_r^{(\infty)}$,
and we finish by induction on the dimension.
\\[2mm]
\noindent{\em Step 3}:
Again fix a ray $R\subset\mcal{C}$. By Steps 1 and 2 there is a collection of $(r+1)$-dimensional polyhedral cones
$\{\mcal{C}_{\alpha}\}_{\alpha\in I_R}$ such that $R\subset\mcal{C}_{\alpha}\subset\mcal{C}$ for every $\alpha\in I_R$,
for every ray $R'\subset\mcal{C}$ there is $\alpha\in I_R$ such that $\mcal{C}_{\alpha}\cap(R+R')\neq R$
and for every two distinct $\alpha,\beta\in I_R$ the linear extensions of $f_{|\mcal{C}_{\alpha}}$ and $f_{|\mcal{C}_{\beta}}$ to $\R^{r+1}$
are not the same function. I will prove that $I_R$ is a finite set.

For each $\alpha\in I_R$ let $x_\alpha$ be a point in $\Int\mcal{C}_\alpha$ and let $H_{\alpha}=(R+\R_+x_\alpha)\cup(-R+\R_+x_\alpha)$.
Let $R_{\alpha}\subset H_\alpha$ be the unique ray orthogonal to $R$. Let $R^\perp$ be the hyperplane orthogonal to $R$.
For each $\alpha$ let $S\cap R^\perp\cap H_\alpha=\{Q_\alpha\}$. If there are infinitely many cones $\mcal{C}_{\alpha}$, then the set
$\{Q_\alpha:\alpha\in I_R\}$ has an accumulation point $Q_\infty$. Let $H_\infty=(R+\R_+Q_\infty)\cup(-R+\R_+Q_\infty)$, let
$H_n$ be a sequence in the set $\{H_{\alpha}\}$ such that $\lim\limits_{n\rightarrow\infty}Q_n=Q_\infty$ where $S\cap R^\perp\cap H_n=\{Q_n\}$,
and let $\mcal{C}_n$ be the corresponding cones in $\{\mcal{C}_{\alpha}\}$.

By assumptions of the lemma there is a point $y\in H_\infty$ such that $f|_{R+\R_+y}$ is linear. Let $x$ be a point on $R$ and let $\mcal{H}$ be any
hyperplane such that $\mcal{H}\cap(\R x+\R y)=\R(x+y)$. By induction there are $r$-dimensional polyhedral cones $\mcal{C}_1,\dots,\mcal{C}_k$
in $\mcal{H}\cap\mcal{C}$ such that $x+y\in\mcal{C}_i$ for all $i$, there is a small $r$-dimensional ball $B_{(r)}\subset\mcal{H}$ centred at $x+y$
such that $B_{(r)}\cap\mcal{C}=B_{(r)}\cap(\mcal{C}_1\cup\dots\cup\mcal{C}_k)$ and
the map $f_{|\mcal{C}_i}$ is linear for every $i$. Fix $i$ and let $g_{ij}$ be generators of $\mcal{C}_i$. Then
\begin{align*}
f\Big(\sum\nolimits_jg_{ij}+x+y\Big)&=\sum\nolimits_jf(g_{ij})+f(x+y)\\
&=\sum\nolimits_jf(g_{ij})+f(x)+f(y),
\end{align*}
so $f$ is linear on the cone $\widetilde{\mcal{C}}_i=\mcal{C}_i+\R_+x+\R_+y$ by Lemma \ref{linear2}. Therefore if we denote
$\widetilde{\mcal{C}}=\mcal{C}_1+\dots+\mcal{C}_k+\R_+x+\R_+y$, then $f_{|\widetilde{\mcal{C}}}$ is PL and
there is a small ball $B_{(r+1)}$ centred at $x+y$ such that $B_{(r+1)}\cap\mcal{C}=B_{(r+1)}\cap\widetilde{\mcal{C}}$.

Take a ball $B_{\varepsilon}$ of radius $\varepsilon\ll1$ centred at $x+y$ such that $x\notin B_\varepsilon$ and
$B_\varepsilon\cap\mcal{C}=B_\varepsilon\cap\widetilde{\mcal{C}}$. Since $\|Q_n-Q_\infty\|<\varepsilon$ for $n\gg0$,
then considering the subspace generated by $R,Q_n$ and $Q_\infty$ we obtain that $H_n$ intersects $\Int B_\varepsilon$ for $n\gg0$. Since
$\widetilde{\mcal{C}}=\bigcup\widetilde{\mcal{C}}_i$, there is an index $i_0$ such that $\widetilde{\mcal{C}}_{i_0}\cap\Int B_\varepsilon$
intersects infinitely many $H_n$. In particular, $\widetilde{\mcal{C}}_{i_0}\cap\Int\mcal{C}_n\neq\emptyset$ for
infinitely many $n$ and therefore $\widetilde{\mcal{C}}_{i_0}\cap\mcal{C}_n$ is an $(r+1)$-dimensional cone as in Step 2.
Thus for every such $n$ the linear extensions of
$f_{|\widetilde{\mcal{C}}_{i_0}}$ and $f_{|\mcal{C}_n}$ to $\R^{r+1}$ are the same since they coincide with the linear extension
of $f_{|\widetilde{\mcal{C}}_{i_0}\cap\mcal{C}_n}$, which is a contradiction and $I_R$ is finite.
\\[2mm]
\noindent{\em Step 4}:
Finally, we have that for every ray $R\subset\mcal{C}$ the map $f|_{\bigcup_{\alpha\in I_R}\mcal{C}_\alpha}$ is PL and
there is small ball $B_R$ centred at $R\cap S$ such that $B_R\cap\mcal{C}=B_R\cap\bigcup_{\alpha\in I_R}\mcal{C}_\alpha$.
There are finitely many open sets $\Int B_R$ which cover the compact set $S\cap\mcal{C}$ and therefore we can choose finitely many polyhedral cones
$\mcal{C}_i$ with $\mcal{C}=\bigcup\mcal{C}_i$ such that $f_{|\mcal{C}_i}$ is PL for every $i$. Thus $f$ is PL.
\end{proof}

\begin{proof}[Proof of Theorem \ref{PLinitial}]
By Corollary \ref{everyplane} and Lemma \ref{piecewise} the map $f_{|\mcal{C}}$ is PL; in other words
we can choose finitely many polyhedral cones $\mcal{C}_i$ with
$\mcal{C}=\bigcup\mcal{C}_i$ such that $f_{|\mcal{C}_i}$ is linear for each $i$.
We can assume the linear extensions of the maps $f_{|\mcal{C}_i}$ and $f_{|\mcal{C}_j}$ to $\R^r$ are not the same
by Step 6 of the proof of Lemma \ref{local3}.

Let $H$ be a hyperplane which contains a common $(r-1)$-dimensional face of cones $\mcal{C}_i$ and $\mcal{C}_j$ and assume $H$ is not rational.
Then similarly as in Step 1 of the proof of Lemma \ref{local3} there is a point $x\in\mcal{C}_i\cap\mcal{C}_j$ whose components are linearly independent
over $\Q$. By the proof of Lemma \ref{local3} there is an $r$-dimensional cone $\widetilde{\mcal{C}}$ such that $x\in\Int\widetilde{\mcal{C}}$ and
the map $f_{|\widetilde{\mcal{C}}}$ is linear. But then as in Step 2 of the proof of Lemma \ref{piecewise} the cones $\widetilde{\mcal{C}}\cap\mcal{C}_i$
and $\widetilde{\mcal{C}}\cap\mcal{C}_i$ are $r$-dimensional and linear extensions of $f_{|\mcal{C}_i}$ and $f_{|\mcal{C}_j}$
coincide since they are equal to the linear extension of $f_{|\widetilde{\mcal{C}}}$, a contradiction.
Therefore all $(r-1)$-dimensional faces of the cones $\mcal{C}_i$ belong to rational hyperplanes and thus $\mcal{C}_i$ are rational
cones. Thus the map $f_{|\mcal{C}}$ is $\Q$-PL.
\end{proof}

\section{b-Divisorial algebras}\label{section}

\begin{de}\label{def1}
Let $X$ be a variety and let $\mcal{S}$ be a submonoid of $\N^r$. If $\m\colon\mcal{S}\rightarrow\bDiv(X)$ is
a superadditive (respectively additive) map, the system of b-divisors $\m(\mcal{S})=\{\m(s)\}_{s\in\mcal{S}}$ is called {\em superadditive}
(respectively {\em additive\/}).

The system $\m(\mcal{S})$ (respectively the map $\m$) is called {\em bounded\/} if the following two conditions
are satisfied:
\begin{itemize}
\item there is a reduced divisor $F$ on $X$ such that $\Supp \m(s)_X\subset F$ for every $s\in\mcal{S}$, that is
$\m$ has {\em bounded support on $X$\/},

\item for every $s\in\mcal{S}$, the limit $\lim\limits_{\kappa\rightarrow\infty}\m(\kappa s)/\kappa$ exists in $\bDiv(X)_{\R}$.
\end{itemize}

If $\mcal{S}'$ is a truncation of a finitely generated monoid $\mcal{S}$, the superadditive system $\m(\mcal{S}')$ is called
a {\em truncation\/} of the system $\m(\mcal{S})$.
\end{de}

Let $\pi\colon X\rightarrow Z$ be a projective morphism of normal varieties and let $\m\colon\mcal{S}\rightarrow\bDiv(X)$ be
a bounded superadditive map such that $\OO_X(\m(s))$ is a coherent sheaf for all $s\in\mcal{S}$.
Let us consider a {\em b-divisorial $\mcal{S}$-graded $\OO_Z$-algebra}
$$R(X,\m(\mcal{S}))=\bigoplus_{s\in\mcal{S}}\pi_*\OO_X(\m(s)).$$
$R(X,\m(\mcal{S}))$ is canonically a graded subalgebra of $k(Z)[T_1,\dots,T_r]$.

Since here we are interested primarily in finite generation questions, I almost always assume that $Z$ is affine.

\begin{de}
Let $X$ be a variety, let $\mcal{S}$ be a monoid and let $\m\colon\mcal{S}\rightarrow\bMob(X)$ be a superadditive map.
Let $\F$ be a b-divisor on $X$ with $\lceil\F\rceil\geq0$.

We say the system $\m(\mcal{S})$ is {\em $\F$-saturated\/} (or that it satisfies the saturation condition with respect to $\F$)
if for all $s,s_1,\dots,s_n\in\mcal{S}$ such that $s=\xi_1s_1+\dots+\xi_ns_n$ for some non-negative {\em rational\/} numbers $\xi_i$, there is a model
$Y_{s,s_1,\dots,s_n}\rightarrow X$ such that for all models $Y\rightarrow Y_{s,s_1,\dots,s_n}$ we have
$$\Mob\lceil\xi_1\m(s_1)_Y+\dots+\xi_n\m(s_n)_Y+\F_Y\rceil\leq \m(s)_Y.$$
If the models $Y_{s,s_1,\dots,s_n}$ do not depend on $s,s_1,\dots,s_n$, we say the system $\m(\mcal{S})$ is {\em uniformly $\F$-saturated\/}.
\end{de}

\begin{re}
It is important to understand that the numbers $\xi_i$ in the previous definition are rational, and that $s$ is not merely
an integral combination of $s_i$. This fact is crucial in proofs.
\end{re}

\begin{lem}\label{satrays}
Let $X$ be a variety, let $\mcal{S}$ be a monoid and let $\m\colon\mcal{S}\rightarrow\bMob(X)$ be a superadditive map.
Let $\F$ be a b-divisor on $X$ with $\lceil\F\rceil\geq0$.
The system $\m(\mcal{S})$ is $\F$-saturated if and only if for all $s\in\mcal{S}$
and all positive integers $\lambda$ and $\mu$, there is a model $Y_{s,\lambda,\mu}\rightarrow X$ such that
for all models $Y\rightarrow Y_{s,\lambda,\mu}$ we have
$$\Mob\lceil(\lambda/\mu)\m(\mu s)_Y+\F_Y\rceil\leq \m(\lambda s)_Y.$$
\end{lem}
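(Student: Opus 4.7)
The plan is to prove both implications by a direct appeal to the definitions. The forward direction is immediate: assuming $\m(\mcal{S})$ is $\F$-saturated, specialize the saturation definition to the element $\lambda s\in\mcal{S}$ written as $\lambda s=(\lambda/\mu)(\mu s)$ (i.e.\ take $n=1$, $s_1=\mu s$, $\xi_1=\lambda/\mu\in\Q_{\geq 0}$); the model furnished by saturation can then be used as $Y_{s,\lambda,\mu}$.

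For the converse, I would start from a general rational decomposition $s=\xi_1 s_1+\dots+\xi_n s_n$ with $\xi_i\in\Q_{\geq 0}$ and clear denominators by writing $\xi_i=p_i/q$ for a common $q\in\N_{>0}$ and $p_i\in\N$, so that $qs=\sum p_i s_i$ holds in $\mcal{S}$. Iterated superadditivity of $\m$ then gives $\m(qs)\geq\sum p_i\m(s_i)$ as b-divisors, hence trace-wise
$$(1/q)\m(qs)_Y\geq\sum\xi_i\m(s_i)_Y$$
on every model $Y$. Adding $\F_Y$, rounding up, and then applying $\Mob$ all preserve this inequality: the first two are obvious, and $\Mob$ is monotone on effective integral divisors because any section of the smaller divisor, multiplied by a defining section of the difference, is a section of the larger, so the fixed parts satisfy the reverse comparison. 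Therefore
$$\Mob\lceil\xi_1\m(s_1)_Y+\dots+\xi_n\m(s_n)_Y+\F_Y\rceil\leq\Mob\lceil(1/q)\m(qs)_Y+\F_Y\rceil.$$
I would then invoke the hypothesis with $\lambda=1$ and $\mu=q$ to obtain a model $Y_{s,1,q}\to X$ such that on every $Y\to Y_{s,1,q}$ the right-hand side is bounded above by $\m(s)_Y$, and set $Y_{s,s_1,\dots,s_n}:=Y_{s,1,q}$.

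I do not expect any serious obstacle here: the only real content is the observation that choosing a single common denominator reduces an arbitrary rational combination to the one-parameter case, after which superadditivity of $\m$ and the elementary monotonicity of $\Mob$ do all the work.
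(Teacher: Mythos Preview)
Your proposal is correct and follows essentially the same route as the paper: both directions are handled exactly as you describe, with the converse obtained by clearing denominators (your $q$ is the paper's $\lambda$), using superadditivity of $\m$ to bound $\sum\xi_i\m(s_i)_Y$ by $(1/q)\m(qs)_Y$, and then invoking the hypothesis with $\lambda=1$, $\mu=q$ to set $Y_{s,s_1,\dots,s_n}:=Y_{s,1,q}$. The only difference is that you spell out the monotonicity of $\Mob$, which the paper leaves implicit.
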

\begin{proof}
Necessity is clear. For sufficiency, fix $s,s_1,\dots,s_n\in\mcal{S}$ and fix non-negative rational numbers $\xi_i$ such that
$s=\xi_1s_1+\dots+\xi_ns_n$. Let $\lambda$ be a positive integer such that $\lambda\xi_i\in\N$ for all $i$.
Then on all models $Y$ higher than $Y_{s,1,\lambda}$ we have
\begin{multline*}
\Mob\lceil\xi_1\m(s_1)_Y+\dots+\xi_n\m(s_n)_Y+\F_Y\rceil\\
=\Mob\big\lceil(1/\lambda)\big(\lambda\xi_1\m(s_1)_Y+\dots+\lambda\xi_n\m(s_n)_Y\big)+\F_Y\big\rceil\\
\leq\Mob\lceil(1/\lambda)\m(\lambda s)_Y+\F_Y\rceil\leq \m(s)_Y.
\end{multline*}
Therefore we can take $Y_{s,s_1,\dots,s_n}:=Y_{s,1,\lambda}$.
\end{proof}

\begin{de}
Let $(X,\Delta)$ be a relative weak Fano klt pair projective over an affine variety $Z$ where $K_X+\Delta$ is $\Q$-Cartier, and
let $\mcal{S}\subset\N^r$ be a finitely generated monoid.

A {\em Shokurov algebra\/} on $X$ is the b-divisorial algebra $R(X,\m(\mcal{S}))$, where $\m\colon\mcal{S}\rightarrow\bMob(X)$ is a superadditive map
such that the system $\m(\mcal{S})$ is bounded and $\A(X,\Delta)$-saturated.
\end{de}

\begin{re}
If $\dim\mcal S_\R=1$, the previous definition reduces to the definition of the Shokurov algebra as given in \cite{Cor07}.
\end{re}

The next result says that saturation is preserved under restriction.

\begin{lem}\label{restriction}
Let $(X,\Delta)$ be a relative weak Fano pair projective over an affine variety $Z$ and let $S$ be a prime component in $\Delta$.
Let $\mcal{S}$ be a finitely generated monoid and assume the system of mobile b-divisors $\{\M_s\}_{s\in\mcal{S}}$
on $X$ is $(\A(X,\Delta)+S)$-saturated. Assume also that $S$ is not contained in
$\Supp\M_{sX}$ for any $s\in\mcal{S}$. Then the system
$\{\res_S\M_s\}_{s\in\mcal{S}}$ on $S$ is $\A(S,\Diff(\Delta-S))$-saturated.
\end{lem}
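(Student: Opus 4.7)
The plan is to run the standard Shokurov-type lifting argument via relative Kawamata--Viehweg vanishing. Fix any $s, s_1, \ldots, s_n \in \mcal{S}$ together with non-negative rationals $\xi_i$ satisfying $s = \sum \xi_i s_i$. By hypothesis there is a model of $X$ on which the $(\A(X, \Delta) + S)$-saturation inequality for this tuple holds, so by going up to any higher model I may assume I have a log resolution $\pi \colon Y \to X$ of $(X, \Delta)$ on which every $\M_{s_i}$ and $\M_s$ descends (so the traces $\M_{s_i, Y}$, $\M_{s, Y}$ are free) and on which the strict transform $T := S_Y$ is smooth. This $T$ will serve as the required saturation model for the restricted system on $S$; higher models of $S$ are handled by raising $Y$ further and running the same argument.

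The next step is a coefficient reduction. Set
$$N_Y := \lceil \textstyle\sum \xi_i \M_{s_i, Y} + (\A(X, \Delta) + S)_Y \rceil,$$
so that the hypothesis gives $\Mob N_Y \leq \M_{s, Y}$. The assumption $S \not\subset \Supp \M_{s_i, X}$ forces $T \not\subset \Supp \M_{s_i, Y}$, and a direct check shows that $T$ has coefficient $0$ in $(\A(X, \Delta) + S)_Y = K_Y - \pi^*(K_X + \Delta) + T$ (the $-1$ coming from $S$ having coefficient $1$ in $\Delta$ is cancelled by $+T$). Hence the ceiling commutes with restriction to $T$, and the b-divisor adjunction formula $(\A(X, \Delta) + S)_Y|_T = \A(S, \Diff(\Delta - S))_T$ yields
$$N_Y|_T = \lceil \textstyle\sum \xi_i (\res_S \M_{s_i})_T + \A(S, \Diff(\Delta - S))_T \rceil,$$
which is exactly the divisor whose $\Mob$ I need to bound by $(\res_S \M_s)_T$.

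The heart of the argument is lifting sections from $T$ to $Y$. Since $K_Y$ and $T$ are integral, a short computation gives
$$N_Y - T - K_Y = \lceil \textstyle\sum \xi_i \M_{s_i, Y} - \pi^*(K_X + \Delta) \rceil.$$
The $\Q$-divisor $A := \sum \xi_i \M_{s_i, Y} - \pi^*(K_X + \Delta)$ is a sum of nef divisors (each free $\M_{s_i, Y}$) and the $\pi$-pullback of $-(K_X + \Delta)$, which is relatively nef and big over $Z$ by the weak Fano hypothesis; its fractional part has snc support by our choice of log resolution. Since $Z$ is affine, I would apply relative Kawamata--Viehweg vanishing to conclude $H^1(Y, \mcal{O}_Y(N_Y - T)) = 0$, so the restriction map $H^0(Y, N_Y) \twoheadrightarrow H^0(T, N_Y|_T)$ is surjective. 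Every section of $\mcal{O}_T(N_Y|_T)$ then lifts to a section of $\mcal{O}_Y(N_Y) \subset \mcal{O}_Y(\M_{s, Y})$, whose restriction lies in $\mcal{O}_T((\res_S \M_s)_T)$; this gives $\Mob(N_Y|_T) \leq (\res_S \M_s)_T$, which is the desired $\A(S, \Diff(\Delta - S))$-saturation on $T$.

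The main obstacle I expect is verifying the b-divisor adjunction identity $(\A(X, \Delta) + S)_Y|_T = \A(S, \Diff(\Delta - S))_T$ on our log resolution together with the accompanying coefficient bookkeeping ensuring that the ceiling commutes with restriction to $T$. Once this is in place, the rest is a routine combination of relative Kawamata--Viehweg vanishing with the reformulation recorded in Lemma \ref{satrays}.
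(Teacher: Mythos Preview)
Your proposal is correct and follows essentially the same route as the paper's proof: reduce the saturation on $S$ to surjectivity of the restriction map $H^0(Y,N_Y)\to H^0(T,N_Y|_T)$ on a log resolution, and kill the obstruction $H^1(Y,N_Y-T)=H^1\big(Y,\lceil\sum\xi_i\M_{s_iY}+\A(X,\Delta)_Y\rceil\big)$ by Kawamata--Viehweg vanishing using that $-(K_X+\Delta)$ is nef and big and the $\M_{s_iY}$ are nef. The paper is terser, deferring the adjunction identity and the ceiling/restriction bookkeeping to \cite[Lemmas 2.3.43, 2.4.3]{Cor07}, whereas you spell these out explicitly; note only that your coefficient computation tacitly uses $\mult_S\Delta=1$, which is the intended (plt) setting though not stated verbatim in the lemma.
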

\begin{proof}
This is analogous to \cite[Lemma 2.3.43, Lemma 2.4.3]{Cor07}. In particular, the claim follows as soon as we have the surjectivity of
the restriction map
\begin{multline*}
H^0\big(Y,\big\lceil\sum\xi_i\M_{s_iY}+(\A(X,\Delta)+S)_Y\big\rceil\big)\\
\rightarrow H^0\big(S_Y,\big\lceil\sum\xi_i\M_{s_iY|S_Y}+\A(S,\Diff(\Delta-S))_{S_Y}\big\rceil\big)
\end{multline*}
for all $\xi_i\in\Q_+$ and all $s_i\in\mcal{S}$, on log resolutions $f\colon Y=Y_{s_1,\dots,s_n}\rightarrow X$
where $\M_{s_iY}$ is free for every $i$. The obstruction to surjectivity is the group
\begin{multline*}
H^1\big(Y,\big\lceil\sum\xi_i\M_{s_iY}+\A(X,\Delta)_Y\big\rceil\big)\\
=H^1\big(Y,K_Y+\big\lceil-f^*(K_X+\Delta)+\sum\xi_i\M_{s_iY}\big\rceil\big).
\end{multline*}
But this group vanishes by Kawamata-Viehweg vanishing since $-(K_X+\Delta)$ is nef and big and all $\M_{s_iY}$ are nef.
\end{proof}

\begin{de}
Let $\pi\colon X\rightarrow Z$ be a projective morphism of varieties, let $\mcal{S}\subset\N^r$ be a finitely generated monoid and let
$\delta\colon\mcal{S}\rightarrow\N$ be an additive map. Assume $\{\B_s\}_{s\in\mcal{S}}$ is a system of effective
$\Q$-b-divisors on $X$ such that
\begin{enumerate}
\item the system $\{\delta(s)\B_s\}_{s\in\mcal{S}}$ is superadditive and bounded,
\item for each $s\in\mcal{S}$ there is a divisor $\Delta_s$ on $X$ such that $K_X+\Delta_s$ is klt and
$\lim\limits_{\kappa\rightarrow\infty}(1/\kappa)\B_{\kappa sX}\leq\Delta_s$,
\item for each $s\in\mcal{S}$ there is a model $Y_s$ over $X$ and a mobile b-divisor $\M_s$ such that
$$\M_{sY}=\Mob\big(\delta(s)(K_Y+\B_{sY})\big)$$
for every model $Y$ over $Y_s$.
\end{enumerate}
Let $\m\colon\mcal{S}\rightarrow\bMob(X)$ be the superadditive map given by $\m(s)=\M_s$ for all $s\in\mcal{S}$.

If the system $\m(\mcal{S})$ is $\F$-saturated for some b-divisor $\F$ with $\lceil\F\rceil\geq0$,
we say the system $\m(\mcal{S})$ is {\em adjoint\/} and that the algebra $R(X,\m(\mcal{S}))$ is an {\em adjoint algebra\/} on $X$.
\end{de}

\section{Finite generation revisited}

\begin{lem}\label{truncation}
Let $X$ be a variety projective over an affine variety $Z$, let $\mcal{S}=\sum_{i=1}^n\N e_i$ be a monoid and let
$\m\colon\mcal{S}\rightarrow\bMob(X)$ be a superadditive map.
If there are positive integers $\kappa_1,\dots,\kappa_n$ and a truncation $\mcal{S}'=\sum_{i=1}^n\N\kappa_ie_i$ of $\mcal{S}$
such that $R(X,\m(\mcal{S}'))$ is finitely generated, then the algebra $R(X,\m(\mcal{S}))$ is finitely generated.
\end{lem}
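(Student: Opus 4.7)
The plan is to show that $B := R(X,\m(\mcal{S}))$ is a finite module over $A := R(X,\m(\mcal{S}'))$. Since $A$ is a finitely generated $\OO_Z$-algebra by hypothesis, this immediately gives that $B$ is a finitely generated $\OO_Z$-algebra.

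First I would set $\kappa := \mathrm{lcm}(\kappa_1,\dots,\kappa_n)$, noting that $\kappa s \in \mcal{S}'$ for every $s \in \mcal{S}$. Iterating superadditivity of $\m$ gives $\kappa\,\m(s) \leq \m(\kappa s)$, so for any section $f \in H^0(X,\OO_X(\m(s))) = B_s$ one has
$$f^\kappa \in H^0(X,\OO_X(\kappa\,\m(s))) \subseteq H^0(X,\OO_X(\m(\kappa s))) = A_{\kappa s}.$$
Thus every homogeneous element of $B$ satisfies the monic equation $X^\kappa - f^\kappa = 0$ over $A$, and $B$ is integral over $A$.

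Next I would verify that the extension of fraction fields $\mathrm{Frac}(B)/\mathrm{Frac}(A)$ is finite. Since $B$ embeds in $k(X)[T_1,\dots,T_r]$ it is a domain, so both fraction fields are defined. Let $T := \{\sum_i b_ie_i : 0 \leq b_i < \kappa_i\}$ be the finite set of coset representatives for $\mcal{S}'$ in $\mcal{S}$. For each class $t \in T$ such that $B_s \neq 0$ for some $s \equiv t$, fix a non-zero homogeneous $f_t \in B_s$. Given any other homogeneous $g \in B$ in the same class, both $g\cdot f_t^{\kappa-1}$ and $f_t^\kappa$ have degree in $\mcal{S}'$, hence lie in $A$; writing $g = (g\cdot f_t^{\kappa-1})\cdot f_t/f_t^\kappa$ exhibits $g \in \mathrm{Frac}(A)\cdot f_t$. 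Consequently $\mathrm{Frac}(B)$ is generated over $\mathrm{Frac}(A)$ by the finitely many $f_t$, each integral over $A$ of degree dividing $\kappa$, and the extension is finite.

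Finally, $A$ is a finitely generated algebra over $\OO_Z(Z)$, which is itself a finitely generated $k$-algebra ($Z$ being a variety over $k$), hence $A$ is a Nagata (universally Japanese) Noetherian domain. The integral closure $\bar A$ of $A$ in the finite field extension $\mathrm{Frac}(B)$ is therefore a finitely generated $A$-module, and $B \subseteq \bar A$ by the integrality step; Noetherianness of $A$ then forces $B$ to be a finitely generated $A$-module, and the lemma follows. The only real content is the integrality estimate in the first step; the main subtlety to verify is that $\mathrm{Frac}(B)/\mathrm{Frac}(A)$ is really finite (not just algebraic), which uses in an essential way that the coset set $\mcal{S}/\mcal{S}'$ is finite, i.e.\ that all $\kappa_i$ are strictly positive.
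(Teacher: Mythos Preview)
Your argument is correct and rests on the same observation as the paper's: every homogeneous $\varphi\in B$ satisfies $\varphi^{\kappa}\in A$, so $B$ is integral over $A$. The paper's proof is literally a one-liner that stops at this point (with $\kappa=\kappa_1\cdots\kappa_n$), leaving the passage from ``integral extension'' to ``finitely generated'' to the reader; you fill this gap carefully via the finiteness of $\mathrm{Frac}(B)/\mathrm{Frac}(A)$ using coset representatives and then invoke the Nagata property of $A$, so your write-up is in fact more complete than the original.
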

\begin{proof}
It is enough to observe that $R(X,\m(\mcal{S}))$ is an integral extension of $R(X,\m(\mcal{S}'))$: for any $\varphi\in R(X,\m(\mcal{S}))$ we have
$\varphi^{\kappa_1\cdots\kappa_n}\in R(X,\m(\mcal{S}'))$. That concludes the proof.
\end{proof}

\begin{lem}\label{limit}
Let $X$ be a variety projective over an affine variety $Z$. Assume a monoid $\mcal{S}$ is finitely generated and let
$\m\colon\mcal{S}\rightarrow\bMob(X)$ be a superadditive map. If there exists a rational polyhedral refinement
$\mcal S_\R=\bigcup_{i=1}^k\Delta_i$ such that $\m_{|\Delta_i\cap\mcal{S}}$ is an additive map up to truncation for each $i$, then
the algebra $R(X,\m(\mcal{S}))$ is finitely generated.
\end{lem}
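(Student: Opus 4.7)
The plan is to reduce finite generation of $R(X,\m(\mcal{S}))$ to finite generation on each rational polyhedral piece $\Delta_i$ of the decomposition and then combine the resulting generating sets.

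For each $i$, the monoid $T_i:=\Delta_i\cap\mcal{S}$ is finitely generated by Lemma~\ref{gordan}. By hypothesis there is a truncation $T_i'\subset T_i$ on which $\m$ is additive; choosing generators $\{f_{i,j}\}_{j=1}^{n_i}$ of $T_i'$, additivity gives $\m(\sum_j a_j f_{i,j})=\sum_j a_j\m(f_{i,j})$. Since each $\m(f_{i,j})\in\bMob(X)$, I pick a common proper birational model $h_i\colon Y_i\to X$ on which every $\m(f_{i,j})$ descends, so that $D_{i,j}:=\m(f_{i,j})_{Y_i}$ is a free divisor on $Y_i$. It follows that for $s=\sum_j a_j f_{i,j}\in T_i'$ the b-divisor $\m(s)$ descends to $Y_i$ with trace $\sum_j a_j D_{i,j}$, so
\[
R(X,\m(T_i'))=\bigoplus_{s\in T_i'}\pi_*\OO_{Y_i}\Big(\sum\nolimits_j a_j D_{i,j}\Big).
\]
This is (a submonoid-graded piece of) the multi-section ring on $Y_i/Z$ of the finitely many base-point-free divisors $D_{i,j}$. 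Since base-point freeness yields a morphism from $Y_i$ to a projective $Z$-scheme on which each $D_{i,j}$ is the pull-back of an ample divisor, the standard finite generation of multi-section rings of ample divisors shows $R(X,\m(T_i'))$ is a finitely generated $\OO_Z$-algebra. Applying Lemma~\ref{truncation} with $\mcal{S}$ replaced by $T_i$ and $\mcal{S}'$ by $T_i'$, we conclude that $R(X,\m(T_i))$ is finitely generated.

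To conclude, since $\mcal{S}=\bigcup_i T_i$, every homogeneous element of $R(X,\m(\mcal{S}))$ lies in some subalgebra $R(X,\m(T_i))$. Decomposing an arbitrary element of $R(X,\m(\mcal{S}))$ into its finitely many homogeneous components and expressing each in terms of generators of the appropriate $R(X,\m(T_i))$, we see that the finite union of the generating sets of $R(X,\m(T_1)),\ldots,R(X,\m(T_k))$ generates $R(X,\m(\mcal{S}))$ as an $\OO_Z$-algebra. The main obstacle in this plan is the middle step: one must use the additivity of $\m$ on $T_i'$ together with the fact that each $\m(f_{i,j})$ is mobile to pass to a single model on which all the relevant traces become simultaneously base-point free, and then invoke finite generation of the resulting multi-graded section ring; the remaining ingredients — the Gordan reduction to finitely generated $T_i$, the truncation step, and the combining of generators — are then essentially formal.
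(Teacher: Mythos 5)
Your overall route matches the paper's: use Gordan's lemma to get finitely generated $T_i = \Delta_i\cap\mcal{S}$, pass to a truncation on which $\m$ is additive, descend to a model where the relevant b-divisors become free, deduce finite generation from a multi-graded section ring of semi-ample divisors, and combine the pieces. The paper builds a single common truncation $\mcal{S}'$ of $\mcal{S}$ before combining and applies Lemma \ref{truncation} once at the end, whereas you truncate each $T_i$ separately and apply Lemma \ref{truncation} piece-by-piece; both work, and your arrangement is if anything slightly cleaner, avoiding the verification that the common truncation meets each $\Delta_i$ in the expected sub-monoid. Your final step --- decompose into homogeneous components and express each in the generators of the appropriate $R(X,\m(T_i))$ --- is correct.

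The gap is in the step you yourself flagged as the main obstacle. You assert that base-point freeness yields a morphism from $Y_i$ to a projective $Z$-scheme on which each $D_{i,j}$ is the pull-back of an ample divisor. This is false for several semi-ample divisors simultaneously: take $Y_i=\PP^1\times\PP^1$, $D_{i,1}=\OO(1,0)$ and $D_{i,2}=\OO(0,1)$. Any morphism $\phi$ with $D_{i,1}=\phi^*A_1$ and $A_1$ ample must contract each fibre of the first projection, since $D_{i,1}$ is trivial there; but $D_{i,2}$ restricts to $\OO_{\PP^1}(1)$ on such a fibre, so $D_{i,2}$ cannot be a $\phi$-pullback. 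The finite generation of $\bigoplus_{(a_j)\in\N^{n_i}}\Gamma\big(Y_i,\sum_j a_jD_{i,j}\big)$ for semi-ample $D_{i,j}$ on a projective scheme over an affine base is nevertheless true, but it does not reduce to the single-divisor case via one morphism. The paper invokes \cite[Lemma 2.8]{HK00} at exactly this point; you should cite that (or an equivalent result) in place of the incorrect reduction.
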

\begin{proof}
Let $\{e_{ij}:j\in I_i\}$ be a finite set of generators of $\Delta_i\cap\mcal{S}$ by Lemma \ref{gordan} and let $\kappa_{ij}$
be positive integers such that $\m|_{\sum_{j\in I_i}\N\kappa_{ij}e_{ij}}$ is additive for each $i$. Set $\kappa:=\prod_{i,j}\kappa_{ij}$ and let
$\mcal{S}'=\sum_{i,j}\N\kappa e_{ij}$ be a truncation of $\mcal{S}$.

Let $\tilde{e}=\sum_{i,j}\lambda_{ij}\kappa e_{ij}\in\Delta_i\cap\mcal{S'}$
for some $\lambda_{ij}\in\N$. Then $\sum_{i,j}\lambda_{ij}e_{ij}\in\Delta_i\cap\mcal{S}$ and thus there are
$\mu_j\in\N$ such that $\sum_{i,j}\lambda_{ij}e_{ij}=\sum_{j\in I_i}\mu_je_{ij}$. From here we have
$$\tilde{e}=\kappa\sum\nolimits_{j\in I_i}\mu_je_{ij}\in\sum\nolimits_{j\in I_i}\N\kappa e_{ij}$$
and therefore $\Delta_i\cap\mcal{S}'=\sum_{j\in I_i}\N\kappa e_{ij}$ is a truncation of $\sum_{j\in I_i}\N\kappa_{ij}e_{ij}$; in particular
$\m_{|\Delta_i\cap\mcal{S}'}$ is additive for each $i$.

I claim the algebra $R(X,\m(\mcal{S}'))$ is finitely generated, and thus the algebra $R(X,\m(\mcal{S}))$ is finitely generated by Lemma \ref{truncation}.
To this end let $Y\rightarrow X$ be a model such that $\m(\kappa e_{ij})$ for all $i,j$ descend to $Y$.
Let $s=\sum_{j\in I_i}\nu_{ij}\kappa e_{ij}\in\Delta_i\cap\mcal{S}'$ for some $i$ and some $\nu_{ij}\in\N$. Then
\begin{align*}
\m(s)&=\sum\nolimits_{j\in I_i}\nu_{ij}\m(\kappa e_{ij})=\sum\nolimits_{j\in I_i}\nu_{ij}\overline{\m(\kappa e_{ij})_Y}\\
&=\overline{\sum\nolimits_{j\in I_i}\nu_{ij}\m(\kappa e_{ij})_Y}=\overline{\m(s)_Y},
\end{align*}
and thus $\m(s)$ descends to $Y$ and
$$R(X,\m(\mcal{S}'))=\bigoplus_{s\in\mcal{S}'}H^0(Y,\m(s)_Y).$$
Fix $i$ and consider the free monoid $\widehat{\mcal{S}}_i=\bigoplus_{j\in I_i}\N\kappa e_{ij}$; the associated Cox ring
$R(Y,\{\m(\kappa e_{ij})_Y\}_{j\in I_i})$ is finitely generated by \cite[Lemma 2.8]{HK00}.
The canonical projection $\widehat{\mcal{S}}_i\rightarrow\Delta_i\cap\mcal{S}'$ gives the surjection
$$R(Y,\{\m(\kappa e_{ij})_Y\}_{j\in I_i})\rightarrow R(X,\m(\Delta_i\cap\mcal{S}')),$$
thus the algebra $R(X,\m(\Delta_i\cap\mcal{S}'))$ is finitely generated for each $i$. The set of generators of $R(X,\m(\Delta_i\cap\mcal{S}'))$
for all $i$ generates $R(X,\m(\mcal{S}'))$ and the claim follows.
\end{proof}

\begin{de}
Let $\mcal{S}$ be a monoid and let $f\colon\mcal{S}\rightarrow G$ be a superadditive map to a monoid $G$.

For every $s\in\mcal{S}$, the smallest positive integer $\iota_s$, if it exists, such that $f(\N\iota_ss)$ is an additive system is called the
{\em index\/} of $s$ (otherwise we set $\iota_s=\infty$).
\end{de}

I can finally make a connection with superlinear functions.

\begin{lem}\label{concave}
Let $X$ be a variety, $\mcal{S}$ a finitely generated monoid and let $f\colon\mcal{S}\rightarrow G$ be
a superadditive map to a monoid $G$ which is a subset
of $\Div(X)$ or $\bDiv(X)$, such that for every $s\in\mcal{S}$ the index $\iota_s$ is finite.

Then there is a unique superlinear function $f^\sharp\colon\mcal{S}_\R\rightarrow G_\R$ such that
for every $s\in\mcal{S}$ there is a positive integer $\lambda_s$ such that $f(\lambda_s s)=f^\sharp(\lambda_s s)$.
Furthermore, let $\mcal{C}$ be a rational polyhedral subcone of $\mcal{S}_\R$. Then $f_{|\mcal{C}\cap\mcal{S}}$ is additive up to
truncation if and only if $f^\sharp_{|\mcal{C}}$ is linear.
\end{lem}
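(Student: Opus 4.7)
The plan is to construct $f^\sharp$ first on the rational points $\mcal{S}_\Q$ using the index hypothesis, then extend by continuity to $\mcal{S}_\R$ via Proposition \ref{Lip}, and finally derive the equivalence from the definitions together with Lemma \ref{linear2}.

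For a rational point $x\in\mcal{S}_\Q$ I would pick any $m\in\N^+$ with $s:=mx\in\mcal{S}$ and set $f^\sharp(x):=(m\iota_s)^{-1}f(\iota_s s)$. Well-definedness for two choices $(m_1,s_1)$, $(m_2,s_2)$ follows by evaluating both through the common multiplier $N:=m_1m_2\iota_{s_1}\iota_{s_2}$: since $f$ is additive on each $\N\iota_{s_i}s_i$, both expressions reduce to $f(Nx)/N$. Positive homogeneity of $f^\sharp$ over $\Q_+$ is then immediate, and $f(\iota_s s)=f^\sharp(\iota_s s)$, so $\lambda_s:=\iota_s$ furnishes the required integer. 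For superadditivity on $\mcal{S}_\Q$ the key observation is $f(s)\leq f^\sharp(s)$ for every $s\in\mcal{S}$, obtained from the superadditivity inequality $f(\iota_s s)\geq\iota_s f(s)$, with equality $f(Ms)/M=f^\sharp(s)$ whenever $\iota_s\mid M$. Given $x,y\in\mcal{S}_\Q$, I would choose $M$ so that $Mx,My\in\mcal{S}$ and $M$ is divisible by both $\iota_{Mx}$ and $\iota_{My}$; then
$$f^\sharp(x+y)\geq\frac{f(Mx+My)}{M}\geq\frac{f(Mx)+f(My)}{M}=f^\sharp(x)+f^\sharp(y).$$
Applying Proposition \ref{Lip} componentwise to the finite-dimensional subspace of $G_\R$ spanned by the image of $f$ then produces a unique superlinear extension of $f^\sharp$ to $\mcal{S}_\R$. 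For uniqueness of $f^\sharp$ with the prescribed property: any superlinear $g$ with $g(\lambda_s^g s)=f(\lambda_s^g s)$ for each $s\in\mcal{S}$ must satisfy $g(s)=f(\iota_s s)/\iota_s=f^\sharp(s)$, by using the integer $\lambda_{\iota_s s}^g$ attached to the point $\iota_s s$ together with additivity of $f$ on $\N\iota_s s$.

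For the final equivalence, let $e_1,\dots,e_n$ generate $\mcal{C}\cap\mcal{S}$, finitely generated by Lemma \ref{gordan}. If $f|_{\sum\N\kappa_i e_i}$ is additive on some truncation, then $f(k\kappa_i e_i)=kf(\kappa_i e_i)$ forces $f^\sharp(\kappa_i e_i)=f(\kappa_i e_i)$, and additivity yields $f^\sharp\big(\sum\kappa_i e_i\big)=\sum\kappa_i f^\sharp(e_i)$, so Lemma \ref{linear2} forces $f^\sharp|_{\mcal{C}}$ to be linear. Conversely, if $f^\sharp|_{\mcal{C}}$ is linear, I would take $\kappa$ divisible by every $\iota_{e_i}$ and set $\mcal{S}':=\sum\N\kappa e_i$. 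Since $f(\kappa e_i)=\kappa f^\sharp(e_i)$, for any $s=\sum\mu_i\kappa e_i\in\mcal{S}'$ the squeezing
$$\sum\mu_i f(\kappa e_i)\leq f(s)\leq f^\sharp(s)=\sum\mu_i f(\kappa e_i)$$
(from superadditivity of $f$, the inequality $f\leq f^\sharp$, and linearity of $f^\sharp$) forces additivity of $f|_{\mcal{S}'}$.

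The main obstacle is the superadditivity of $f^\sharp$ on $\mcal{S}_\Q$: it depends crucially on the index hypothesis, which lets us pick a common multiplier $M$ simultaneously realizing both of the values $f^\sharp(x)$ and $f^\sharp(y)$, a feature not available for a general superadditive $f$.
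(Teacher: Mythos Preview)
Your argument follows the paper's proof closely: define $f^\sharp$ on $\mcal{S}_\Q$ via the index, verify well-definedness, $\Q_+$-homogeneity and superadditivity, then extend by Proposition~\ref{Lip}, and finally deduce the equivalence. Two remarks. First, the phrase ``the finite-dimensional subspace of $G_\R$ spanned by the image of $f$'' is not justified: there is no bounded-support hypothesis in the lemma, so in both $\Div(X)$ and $\bDiv(X)$ the span of $f(\mcal{S})$ may well be infinite-dimensional. The paper avoids this by applying Proposition~\ref{Lip} to each scalar function $s\mapsto\mult_E f^\sharp(s)$ separately, where $E$ ranges over prime divisors (respectively geometric valuations); your word ``componentwise'' suggests you intend the same, so just drop the finite-dimensionality claim. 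Second, for the implication ``$f^\sharp_{|\mcal{C}}$ linear $\Rightarrow$ additive up to truncation'' the paper invokes Lemma~\ref{linear} at the point $s_0=\sum e_i$, whereas your squeezing
\[
\sum\mu_i f(\kappa e_i)\leq f(s)\leq f^\sharp(s)=\sum\mu_i f(\kappa e_i)
\]
is a clean direct alternative that bypasses that lemma.
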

\begin{proof}
The construction will show that $f^\sharp$ is the unique function with the stated properties.
To start with, fix a point $s\in\mcal{S}_\Q$ and let $\kappa$ be a positive integer such that $\kappa s\in\mcal{S}$. Set
$$f^\sharp(s):=\frac{f(\iota_{\kappa s}\kappa s)}{\iota_{\kappa s}\kappa}.$$
This is well-defined: take another $\kappa'$ such that $\kappa's\in\mcal{S}$. Then by the definition of the index we have
$$f(\iota_{\kappa s}\iota_{\kappa's}\kappa\kappa's)=
\iota_{\kappa s}\kappa f(\iota_{\kappa's}\kappa's)=\iota_{\kappa's}\kappa' f(\iota_{\kappa s}\kappa s),$$
so $f(\iota_{\kappa s}\kappa s)/\iota_{\kappa s}\kappa=f(\iota_{\kappa's}\kappa's)/\iota_{\kappa's}\kappa'$.

Now let $s\in\mcal{S}_\Q$, let $\xi$ be a positive rational number and let $\lambda$ be a sufficiently divisible positive integer such that
$\lambda\xi s\in\mcal{S}$. Then
$$f^\sharp(\xi s)=\frac{f\big((\iota_{\lambda\xi s}\lambda)\xi s\big)}{\iota_{\lambda\xi s}\lambda}=
\xi\frac{f\big((\iota_{\lambda\xi s}\lambda\xi)s\big)}{\iota_{\lambda\xi s}\lambda\xi}=\xi f^\sharp(s),$$
so $f^\sharp$ is positively homogeneous (with respect to rational scalars).
It is also superadditive: let $s_1,s_2\in\mcal{S}_\Q$ and let $\kappa$ be a sufficiently divisible positive integer such that
$f(\kappa s_1)=f^\sharp(\kappa s_1)$, $f(\kappa s_2)=f^\sharp(\kappa s_2)$ and
$f\big(\kappa(s_1+s_2)\big)=f^\sharp\big(\kappa(s_1+s_2)\big)$. By superadditivity of $f$ we have
$$f(\kappa s_1)+f(\kappa s_2)\leq f\big(\kappa(s_1+s_2)\big),$$
so dividing the inequality by $\kappa$ we obtain superadditivity of $f^\sharp$.

Let $E$ be any divisor on $X$, respectively any geometric valuation $E$ over $X$, when $G\subset\Div(X)$, respectively $G\subset\bDiv(X)$.
Consider the function $f^\sharp_E$ given by $f^\sharp_E(s)=\mult_Ef^\sharp(s)$.
Proposition \ref{Lip} applied to each $f^\sharp_E$ shows that $f^\sharp$ extends to a superlinear function on
the whole $\mcal{S}_\R$.

For the statement on cones, necessity is clear. Assume $f^\sharp|_{\mcal{C}}$ is linear and by Lemma \ref{gordan} let $e_1,\dots,e_n$ be
generators of $\mcal{C}\cap\mcal{S}$. For $s_0=e_1+\dots+e_n$ we have
\begin{equation}\label{eq9}
f^\sharp(s_0)=f^\sharp(e_1)+\dots+f^\sharp(e_n).
\end{equation}
Let $\mu$ be a positive integer such that $f(\mu s_0)=f^\sharp(\mu s_0)$ and $f(\mu e_i)=f^\sharp(\mu e_i)$ for all $i$.
From \eqref{eq9} we obtain
$$f(\mu s_0)=f(\mu e_1)+\dots+f(\mu e_n),$$
and Lemma \ref{linear} implies that $f^\sharp$ is additive on the truncation $\widehat{\mcal{S}}=\sum\N\mu e_i$ of $\mcal{C}\cap\mcal{S}$.
\end{proof}

\begin{de}
In the context of Lemma \ref{concave}, the function $f^\sharp$ is called {\em the straightening of $f$\/}.
\end{de}

\begin{re}\label{equal}
In the context of the assumptions of Lemma \ref{concave} let $s\in\mcal{S}$.
Let $\lambda$ be a positive integer such that $f^\sharp(\lambda s)=f(\lambda s)$. Then for every positive integer $\mu$ we have
$$f(\mu\lambda s)\geq\mu f(\lambda s)=\mu f^\sharp(\lambda s)=f^\sharp(\mu\lambda s)\geq f(\mu\lambda s),$$
so $f(\mu\lambda s)=\mu f(\lambda s)$. Therefore the index $\iota_s$ is the smallest integer $\lambda$ such that $f^\sharp(\lambda s)=f(\lambda s)$.
\end{re}

\begin{re}\label{remark}
The formulations of Conjectures A and B are in general the best possible, that is we cannot extend the results to the boundary of the cone
$\mcal S_\R$. For let $X$ be a variety, let $\mcal{S}=\N^2$ and assume $\m\colon\mcal{S}\rightarrow\bMob(X)$
is a superadditive map such that the system $\m(\mcal{S})$ is bounded and $\F$-saturated.
Let $\n\colon\mcal{S}\rightarrow\bMob(X)$ be the superadditive map given by
$$\n(s)=\begin{cases}
\m(s), & s\in\mcal S_\R\backslash\Int\mcal S_\R,\\
\m(2s), & s\in\Int\mcal S_\R.
\end{cases}$$
Since saturation is the property of rays by Lemma \ref{satrays}, the system $\n(\mcal{S})$ is again $\F$-saturated. However the algebra
$R(X,\n(\mcal{S}))$ is not finitely generated since the map $\n^\sharp$ is not continuous on the whole $\mcal S_\R$.
\end{re}

\section{Curve case}

In this section I will confirm Conjectures A and B on an affine curve.

\begin{te}\label{corollary}
Let $X$ be an affine curve, let $\mcal{S}$ be a finitely generated submonoid of $\N^r$ and
let $\m\colon\mcal{S}\rightarrow\Mob(X)$ be a superadditive map such that the system $\m(\mcal{S})$ is bounded and $\F$-saturated.
Let $\mcal{C}$ be a rational polyhedral cone in $\Int\mcal S_\R$.

Then the algebra $R(X,\m(\mcal{C}\cap\mcal{S}))$ is finitely generated.
\end{te}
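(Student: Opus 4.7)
Since $X$ is normal of dimension one, $X$ is smooth and the only proper model over $X$ is $X$ itself; consequently b-divisors coincide with divisors, $\Mob(X)$ coincides with the cone of effective divisors, and $\F$ is a divisor. Boundedness provides a finite set $F=\{P_1,\dots,P_k\}$ containing $\Supp\m(s)$ for every $s\in\mcal S$; write $\m(s)=\sum_j f_j(s)P_j$ for superadditive $f_j\colon\mcal S\to\N$, and set $\phi_j:=\mult_{P_j}\F$. From $\lceil\F\rceil\geq 0$ we have $\phi_j>-1$, and applying Lemma \ref{satrays} with $\lambda=\mu=1$ forces $\lceil\phi_j\rceil\leq 0$, so $\phi_j\in(-1,0]$. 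Since $(\lambda/\mu)\m(\mu s)\geq 0$ and $\lceil\F\rceil\geq 0$, the ceiling $\lceil(\lambda/\mu)\m(\mu s)+\F\rceil$ is effective, so $\Mob$ acts as the identity; saturation thus translates coefficient-wise into
$$f_j(\lambda s)\geq\lceil(\lambda/\mu)f_j(\mu s)+\phi_j\rceil\qquad(s\in\mcal S,\ \lambda,\mu\in\N_{>0}).$$

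The heart of the proof is a Diophantine step establishing rationality and \emph{uniform} denominator bounds for the limits $\alpha_j(s):=\lim_\kappa f_j(\kappa s)/\kappa$, which exist by boundedness and equal $\sup_\kappa f_j(\kappa s)/\kappa$ by Fekete's lemma. Letting $\mu\to\infty$ through multiples of a fixed $\mu_0$ so that $(\lambda/\mu)f_j(\mu s)\nearrow\lambda\alpha_j(s)$, and using left-continuity of $\lceil\cdot\rceil$, the saturation inequality yields $f_j(\lambda s)\geq\lceil\lambda\alpha_j(s)+\phi_j\rceil$. Combining this with the Fekete bound $f_j(\lambda s)\leq\lambda\alpha_j(s)$ forces $\{\lambda\alpha_j(s)\}\leq -\phi_j<1$ whenever $\lambda\alpha_j(s)\notin\Z$. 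If $\alpha_j(s)$ were irrational, Weyl equidistribution of $\{\lambda\alpha_j(s)\}$ in $[0,1)$ would produce $\lambda$ violating this bound; hence $\alpha_j(s)=p/q\in\Q$ in lowest terms, and since the fractional parts $\{\lambda p/q\}$ attain $(q-1)/q$ we get $q\leq 1/(1+\phi_j)$. Consequently there exists a single integer $N$, depending only on $\phi_1,\dots,\phi_k$, such that $\alpha_j(s)\in\tfrac 1N\Z$ for every $j$ and every $s\in\mcal S$.

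Substituting $\lambda=N$ in the saturation inequality, $N\alpha_j(s)\in\Z$ together with $\phi_j\in(-1,0]$ gives $f_j(Ns)\geq N\alpha_j(s)$, and the reverse Fekete inequality then forces $f_j(Ns)=N\alpha_j(s)$ for every $s$ and every $j$; hence the index $\iota_s$ is finite and divides $N$ uniformly in $s$. The straightening $\m^\sharp\colon\mcal S_\R\to\bDiv(X)_\R$ therefore exists by Lemma \ref{concave} and is superlinear, with components $g_j:=\mult_{P_j}\m^\sharp$ taking values in $\tfrac 1N\Z$ on $\mcal S$. Each $Ng_j$ is superlinear and integer-valued on $\mcal S$, so Corollary \ref{PL} shows $Ng_j|_{\mcal C}$, and hence $g_j|_{\mcal C}$, is rationally piecewise linear. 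Taking a common rational polyhedral refinement $\mcal C=\bigcup_i\Delta_i$ on which $\m^\sharp$ is linear, Lemma \ref{concave} gives additivity up to truncation of $\m|_{\Delta_i\cap\mcal S}$, and Lemma \ref{limit} applied to the finitely generated monoid $\mcal C\cap\mcal S$ yields finite generation of $R(X,\m(\mcal C\cap\mcal S))$.

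The main obstacle is the Diophantine step: extracting a \emph{uniform} denominator bound on the $\alpha_j(s)$ over all $s\in\mcal S$ is exactly what reduces the problem to Corollary \ref{PL}. Without this uniformity one would need to verify the gap condition of Theorem \ref{PLinitial} directly; with it, the rest is a clean combination of the convex-geometric results of Sections 2--4.
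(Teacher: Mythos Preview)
Your proof is correct and follows essentially the same route as the paper: use saturation to obtain a uniform bound on the indices $\iota_s$, then invoke Corollary~\ref{PL} together with Lemmas~\ref{concave} and~\ref{limit}. Your Diophantine step---bounding $\{\lambda\alpha_j(s)\}\leq -\phi_j$ via Fekete and saturation, then reading off a uniform denominator bound from density of fractional parts---is a compact repackaging of the paper's Lemmas~\ref{difference} and~\ref{index}, and applying Corollary~\ref{PL} to $Ng_j$ on the original monoid $\mcal S$ (rather than to a truncation of $\mcal C\cap\mcal S$) is if anything a cleaner way to land inside the hypotheses of that corollary.
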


\begin{re}\label{remark1}
Observe that on a curve b-divisors are just the usual divisors. Also all divisors move in the corresponding
linear systems, so the saturation condition reads
$$\lceil(\mu/\nu)\m(\nu s)+\F\rceil\leq \m(\mu s)$$
for every $s\in\mcal{S}$ and all positive integers $\mu$ and $\nu$.
By boundedness the limit $\lim\limits_{\mu\rightarrow\infty}\m(\mu s)/\mu$ exists for every $s\in\mcal{S}$.
Therefore for each $s\in\mcal{S}$ the algebra
$R(X,\m(\N s))$ is finitely generated, see \cite[2.3.10]{Cor07}, and thus the index of every $s\in\mcal{S}$ is finite.
Furthermore the map $\m^\sharp|_{\mcal{C}}$ is $\Q$-PL if and only if for every prime divisor $E$ in the support of
$\m(\mcal{S})$ the function $\m^\sharp_E|_{\mcal{C}}$ is $\Q$-PL, see the proof of Lemma \ref{concave}.
Also the saturation condition on a curve is a component-wise
condition, so from now on I assume the system $\m(\mcal{S})$ is supported at a point.
\end{re}

\begin{lem}\label{difference}
Let $X$ be an affine curve, let $\mcal{S}$ be a finitely generated monoid and let $\m\colon\mcal{S}\rightarrow\Mob(X)$ be a
superadditive map such that the system $\m(\mcal{S})$ is bounded, supported at a point $P$ and $\F$-saturated. Let
$\m^\sharp\colon\mcal S_\R\rightarrow\Mob(X)_{\R}$ be the straightening of $\m$.

Then there exists a constant $0<b\leq1/2$ with the following property: for each $s\in\mcal{S}$ either $\m^\sharp(s)=\m(s)$ or
$\m^\sharp(s)=\m(s)+e_sP$ for some $e_s$ with $b\leq e_s\leq 1-b$.
\end{lem}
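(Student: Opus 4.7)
The plan is to exploit that since the system is supported at the single point $P$, everything reduces to scalar functions: writing $\m(s) = \phi_0(s) P$ and $\m^\sharp(s) = \phi(s) P$, one has $\phi_0 \colon \mcal{S} \to \N$ superadditive and bounded, and $\phi \colon \mcal{S}_\R \to \R_{\geq 0}$ its straightening (which exists by Lemma \ref{concave}, since $\iota_s < \infty$ on a curve by Remark \ref{remark1}). Let $f$ denote the coefficient of $\F$ at $P$, so $f > -1$. A preliminary reduction: taking $\mu = \nu$ in the one-ray form of saturation (Lemma \ref{satrays}) immediately forces $\lceil f \rceil \leq 0$, so $f \in (-1, 0]$.

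The first real step is to pin down $\phi_0$ precisely in terms of $\phi$. By Fekete's lemma applied to the superadditive sequence $\phi_0(\nu s)$, the quotient $\phi_0(\nu s)/\nu$ converges to $\phi(s)$ from below as $\nu \to \infty$. Passing to this limit in the saturation inequality $\lceil (\mu/\nu)\phi_0(\nu s) + f \rceil \leq \phi_0(\mu s)$ with $\mu = 1$ yields
$$\lceil \phi(s) + f \rceil \leq \phi_0(s).$$
Writing $\phi(s) = n + \theta$ with $n = \lfloor \phi(s) \rfloor$ and $\theta = \{\phi(s)\} \in [0, 1)$, and combining with $\phi_0(s) \leq \phi(s) < n+1$, a short case analysis on whether $\theta + f$ is positive forces $\phi_0(s) = \lfloor \phi(s) \rfloor$ and $e_s := \phi(s) - \phi_0(s) = \{\phi(s)\} \leq -f$. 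This already gives the desired upper bound $e_s \leq 1-b$ for any $b \leq 1+f$.

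The heart of the argument is to upgrade this bound by applying the same analysis to every positive multiple $ns$. The resulting constraint $\{\phi(ns)\} \leq -f$, combined with $\phi(ns) = n\phi(s) = n\phi_0(s) + ne_s$ and $n\phi_0(s) \in \Z$, rewrites as
$$\{n e_s\} \leq -f \qquad \text{for every } n \in \N_{>0}.$$
By Lemma \ref{concave}, $\phi(s) = \phi_0(\iota_s s)/\iota_s \in \Q$, hence $e_s \in \Q$. If $e_s > 0$, write $e_s = p/q$ in lowest terms with $0 < p < q$; invertibility of $p$ modulo $q$ ensures that $\{np/q\}$ attains the value $(q-1)/q$ for some $n \in \{1,\dots,q-1\}$. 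The constraint $(q-1)/q \leq -f$ then forces $q \leq 1/(1+f)$, so $e_s \geq 1/q \geq 1+f$.

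Setting $b := \min(1+f, 1/2)$ concludes the proof: $0 < b \leq 1/2$ and, for each $s$, either $e_s = 0$ or $b \leq 1+f \leq e_s \leq -f = 1-(1+f) \leq 1-b$. The only technical subtlety is justifying the limit $\nu \to \infty$ inside the ceiling, which is routine because $(\mu/\nu)\phi_0(\nu s)$ is bounded above by $\mu\phi(s)$ and converges to it, so the ceilings stabilise for large $\nu$. The conceptually new input is the trick of testing the saturation bound along the entire ray $\N s$ at once, which converts a pointwise inequality into a denominator restriction for $e_s$.
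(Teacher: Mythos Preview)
Your argument is correct. The upper bound $e_s\leq -f$ (in your sign convention, i.e.\ $e_s\leq f$ in the paper's convention where $\F=-fP$) is obtained exactly as in the paper: apply saturation with $\mu=1$ and $\nu$ large enough that $(1/\nu)\phi_0(\nu s)=\phi(s)$; your limit version is equivalent, and your justification that the ceilings stabilise is fine.

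The lower bound, however, is reached by a genuinely different route. The paper locates the \emph{first jump}: the smallest $\lambda$ with $\m((\lambda+1)s)\neq(\lambda+1)\m(s)$, and then applies saturation with $\mu=\lambda$, $\nu=\lambda+1$ to force $\lambda/(\lambda+1)\leq f$, hence $e_s\geq 1/(\lambda+1)\geq 1-f$. You instead first deduce $\phi_0(s)=\lfloor\phi(s)\rfloor$, feed this back along the whole ray to get $\{ne_s\}\leq -f$ for every $n$, and then use rationality of $e_s=p/q$ together with the fact that $\{np/q\}$ hits $(q-1)/q$ to bound the denominator: $q\leq 1/(1+f)$, whence $e_s\geq 1/q\geq 1+f$. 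Both arguments yield the same constant $b=\min\{1+f,1/2\}$ (your notation). Your approach has the pleasant side effect of proving Lemma~\ref{index} simultaneously, since $\iota_s=q\leq 1/b$ drops out directly; the paper proves that lemma separately by a coprimality argument. On the other hand, the paper's first-jump argument is slightly more self-contained in that it does not need to invoke rationality of $e_s$ explicitly.
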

\begin{proof}
Let $\F=-fP$ with $f<1$. Fix $s\in\mcal{S}$ and assume $\m^\sharp(s)\neq \m(s)$. Then there is the smallest positive integer $\lambda$ such that
$\m\big((\lambda+1)s\big)\neq(\lambda+1)\m(s)$; in particular
$$\m(\lambda s)=\lambda \m(s)$$
and
$$\m\big((\lambda+1)s\big)=(\lambda+1)\m(s)+e_{\lambda s}P$$
for some $e_{\lambda s}\geq1$. From the saturation condition we have
$$\big\lceil\big(\lambda/(\lambda+1)\big)\m\big((\lambda+1)s\big)-fP\big\rceil\leq \m(\lambda s),$$
that is
$$\big\lceil\m(\lambda s)+\big(\lambda/(\lambda+1)\big)e_{\lambda s}P-fP\big\rceil\leq \m(\lambda s).$$
This implies $\lambda/(\lambda+1)\leq f$, and so $1/(\lambda+1)\geq1-f$. Therefore
$$\m^\sharp(s)\geq\frac{1}{\lambda+1}\m\big((\lambda+1)s\big)=\m(s)+\frac{1}{\lambda+1}e_{\lambda s}P\geq\m(s)+(1-f)P.$$

On the other hand, let $\kappa$ be a positive integer such that $\m^\sharp(\kappa s)=\m(\kappa s)$. Then saturation gives
$$\lceil(1/\kappa)\m(\kappa s)-fP\rceil\leq \m(s),$$
that is
$$\lceil\m^\sharp(s)-fP\rceil\leq \m(s).$$
Hence
$$\m^\sharp(s)-\m(s)\leq fP.$$
In particular if $f\leq1/2$ then $\m^\sharp(s)=\m(s)$ for every $s\in\mcal{S}$. Set $b:=\min\{1-f,1/2\}$.
\end{proof}

\begin{lem}\label{index}
Let $X$ be an affine curve, let $\mcal{S}$ be a finitely generated monoid and let $\m\colon\mcal{S}\rightarrow\Mob(X)$ be a
superadditive map such that the system $\m(\mcal{S})$ is bounded, supported at a point $P$ and $\F$-saturated. Let $b$ be the constant
from Lemma \ref{difference}. Then for each $s\in\mcal{S}$ we have $\iota_s\leq1/b$.
\end{lem}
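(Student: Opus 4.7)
The plan is to reduce the statement to a Diophantine condition on the real number $e_s := \mult_P(\m^\sharp(s) - \m(s))$ and then conclude via an elementary density argument.

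First I would dispose of the trivial case $e_s = 0$, where $\iota_s = 1$ by Remark \ref{equal}. Assume therefore $e_s \in [b, 1-b]$ and consider, for each positive integer $\lambda$, the quantity $e_{\lambda s} := \mult_P(\m^\sharp(\lambda s) - \m(\lambda s))$, which belongs to $\{0\} \cup [b, 1-b]$ by Lemma \ref{difference}. Positive homogeneity of $\m^\sharp$ together with superadditivity of $\m$ give
$$\m^\sharp(\lambda s) = \lambda \m(s) + \lambda e_s P \quad\text{and}\quad \m(\lambda s) = \lambda \m(s) + k_\lambda P$$
for some non-negative integer $k_\lambda$, so $e_{\lambda s} = \lambda e_s - k_\lambda$. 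Since $e_{\lambda s} \in [0, 1-b] \subset [0, 1)$, this forces $k_\lambda = \lfloor \lambda e_s \rfloor$ and $e_{\lambda s} = \{\lambda e_s\}$. The conclusion of Lemma \ref{difference} thus translates into the Diophantine constraint
$$\{\lambda e_s\} \in \{0\} \cup [b, 1-b] \quad \text{for every positive integer } \lambda.$$

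To finish, I would argue that if $e_s$ were irrational then by Kronecker's theorem the orbit $\{\lambda e_s \bmod 1 : \lambda \in \N\}$ would be dense in $[0,1)$, contradicting the constraint since the complement $(0, b) \cup (1-b, 1)$ is a nonempty open subset of $[0, 1)$. Hence $e_s = p/q$ in lowest terms, and the orbit is exactly $\{0, 1/q, \ldots, (q-1)/q\}$; its smallest nonzero element $1/q$ must lie in $[b, 1-b]$, giving $q \leq 1/b$. Remark \ref{equal} identifies $\iota_s$ as the smallest positive integer $\lambda$ with $e_{\lambda s} = 0$, equivalently with $\lambda e_s \in \Z$, which is exactly $q$; hence $\iota_s = q \leq 1/b$.

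The only conceptually non-trivial step is the identification $e_{\lambda s} = \{\lambda e_s\}$: once one notices that Lemma \ref{difference} squeezes the integer $k_\lambda$ to the unique possibility $\lfloor \lambda e_s \rfloor$, the statement collapses to a standard density fact about fractional parts. The Diophantine input required is just Kronecker's density theorem, so I do not anticipate any serious obstacle beyond carefully verifying the edge case where $\lambda e_s$ is itself an integer.
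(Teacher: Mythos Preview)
Your argument is correct and essentially identical to the paper's: both establish that $e_{\lambda s}=\{\lambda e_s\}$ and then exhibit a multiple for which this fractional part equals $1/\iota_s$, so that Lemma~\ref{difference} forces $1/\iota_s\geq b$. The Kronecker detour is unnecessary, since finiteness of $\iota_s$ is already granted by Remark~\ref{remark1}; the paper simply writes $\m^\sharp(\iota_s s)=\mu_s P$, checks $\gcd(\iota_s,\mu_s)=1$, and uses B\'ezout to pick $\kappa$ with $\kappa\mu_s\equiv 1\pmod{\iota_s}$ directly.
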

\begin{proof}
Let $\m^\sharp\colon\mcal S_\R\rightarrow\Mob(X)_{\R}$ be the straightening of $\m$.
Observe that Lemma \ref{difference} implies that $\m(s)=\lfloor\m^\sharp(s)\rfloor$ for each $s\in\mcal{S}$, and this in turn implies that
the index $\iota_s$ is the smallest integer $\lambda$ such that $\m^\sharp(\lambda s)$ is an integral divisor (cf.\ Remark \ref{equal}).

Now fix $s\in\mcal{S}$, assume $\iota_s>1$ and let $\m^\sharp(\iota_ss)=\m(\iota_ss)=\mu_sP$.
Notice that $\iota_s$ and $\mu_s$ must be coprime: otherwise assume $p$ is a prime dividing both $\iota_s$ and $\mu_s$. Then
$$\m^\sharp\big((\iota_s/p)s\big)=(\mu_s/p)P$$
is an integral divisor and so $\iota_s$ is not the index of $s$, a contradiction. Therefore there is an integer
$1\leq\kappa\leq\iota_s-1$ such that $\kappa\mu_s\equiv1\pmod{\iota_s}$, and therefore
$$\m^\sharp(\kappa s)=(\kappa\mu_s/\iota_s)P\quad\textrm{and}\quad\m(\kappa s)=\big((\kappa\mu_s-1)/\iota_s\big)P.$$
Combining this with Lemma \ref{difference} we obtain
$$bP\leq\m^\sharp(\kappa s)-\m(\kappa s)=(1/\iota_s)P,$$
and finally $\iota_s\leq1/b$.
\end{proof}

Finally we have

\begin{proof}[Proof of Theorem \ref{corollary}]
By Lemma \ref{gordan} the monoid $\mcal{S}'=\mcal{C}\cap\mcal{S}$ is finitely generated and let $e_1,\dots,e_n$ be its generators.
We have $\mcal S'_\R=\mcal{C}$ and $\m^\sharp$ is continuous on $\mcal S'_\R$.
Setting $\kappa:=\lfloor1/b\rfloor!$ for $b$ as in Lemma \ref{difference},
and taking the truncation $\widehat{\mcal{S}}=\sum_{i=1}^n\N\kappa e_i$ of $\mcal{S}'$ we have
that $\m^\sharp(s)=\m(s)$ for every $s\in\widehat{\mcal{S}}$ by Lemma \ref{index} and $\mcal S'_\R=\widehat{\mcal{S}}_\R$.
By Remark \ref{remark1} I assume the system $\m(\mcal{S})$ is supported at a point.

By Corollary \ref{PL} applied to the monoid $\widehat{\mcal{S}}$ the map $\m^\sharp|_{\widehat{\mcal{S}}_\R}$ is $\Q$-PL
and thus the algebra $R(X,\m(\mcal{S}'))$
is finitely generated by Lemmas \ref{limit} and \ref{concave}.
\end{proof}

\bibliography{biblio}
\pagestyle{plain}
\end{document}